\theoremstyle{definition}
\newtheorem{thm}{Theorem}[section]
\newtheorem{prop}[thm]{Proposition}
\newtheorem{conj}[thm]{Conjecture}
\newtheorem{defn}[thm]{Definition}
\newtheorem{lem}[thm]{Lemma}
\newtheorem{lemma}[thm]{Lemma}
\newtheorem{example}[thm]{Example}
\DeclareMathOperator{\glex}{glex}
\DeclareMathOperator{\grevlex}{grevlex}
\DeclareMathOperator{\lex}{lex}
\DeclareMathOperator{\init}{in}
\DeclareMathOperator{\supp}{supp}
\DeclareMathOperator{\wt}{wt}
    \newcommand{\calA}{\mathcal{A}}    
    \newcommand{\calB}{\mathcal{B}}
    \newcommand{\cC}{\mathcal{C}} 
    \newcommand{\cD}{\mathcal{D}}
    \newcommand{\cE}{\mathcal{E}}
    \newcommand{\cF}{\mathcal{F}}
    \newcommand{\cG}{\mathcal{G}}
    \newcommand{\cL}{\mathcal{L}}
    \newcommand{\cU}{\mathcal{U}}
    \newcommand{\cV}{\mathcal{V}}
    \newcommand{\cW}{\mathcal{W}}
    \newcommand{\cZ}{\mathcal{Z}}
    \newcommand{\CC}{\mathbb{C}}
    \newcommand{\commentout}[1]{}
    \newcommand{\iso}{\cong}
    \newcommand{\lglex}{\prec_{\glex}}
    \newcommand{\lgrevlex}{\prec_{\grevlex}}
    \newcommand{\bbC}{\mathbb{C}}
    \newcommand{\NN}{\mathbb{N}}
    \newcommand{\RR}{\mathbb{R}}
    \newcommand{\bbR}{\mathbb{R}}
    \newcommand{\ZZ}{\mathbb{Z}}
    \newcommand{\bbZ}{{\mathbb{Z}}}
    \newcommand{\initp}{\init_{\prec}}
    \newcommand{\wtil}{\widetilde}
    \newlength{\dividendlength}
    \newlength{\divisorlength}
    \newlength{\dividendheight}
    \newlength{\divisorheight}
    \newlength{\maxheight}
\begin{document}
    
    \title{Universal Gr\"obner Bases of Toric Ideals of Combinatorial Neural Codes}
    
    \author{Melissa Beer}
    \address{
        Department of Mathematics and Computing\\
        Franklin College\\
        Franklin, IN 46131}
    \email{Melissa.Beer@franklincollege.edu}
    
    \author{Robert Davis}
    \address{
        Department of Mathematics\\
        Harvey Mudd College\\
        Claremont, CA 91711}
    \email{rdavis@hmc.edu}
    
    \author{Thomas Elgin}
    \address{
        Department of Mathematics\\
        University of South Carolina\\
        Columbia, SC 29208}
    \email{thomaselgin97@gmail.com}
    
    \author{Matthew Hertel}
    \address{
        Department of Mathematics\\
        Michigan State University\\
        East Lansing, MI 48824}
    \email{hertelma@msu.edu}
    
    \author{Kira Laws}
    \address{
        Department of Mathematical Sciences\\
        Appalachian State University\\
        Boone, NC 28608}
    \email{lawskn@appstate.edu}
    
    \author{Rajinder Mavi}
    \address{
        Department of Mathematics\\
        Ripon College\\
        Ripon, WI 54971}
    \email{mavir@ripon.edu}
    
    \author{Paula Mercurio}
    \address{
        Department of Mathematics\\
        Michigan State University \\
        East Lansing, MI, 48824}
    \email{mercuri6@msu.edu}
    
    \author{Alexandra Newlon}
    \address{
        Department of Mathematics\\
        Colgate University \\
        Hamilton, NY, 13346}
    \email{anewlon@colgate.edu}
    
    \thanks{
    This work was partially supported by the National Security Agency (NSA Award No. H98230-18-1-0042), the National Science Foundation (NSF Award No. 1559776). 
    The authors would like to thank Robert W. Bell for his work in organizing the 2018 SURIEM program, during which this work was developed.}
    \maketitle
    
    \begin{abstract}
        In the $1970$s, O’Keefe and Dostrovsky discovered that certain neurons, called place cells, in an animal’s brain are tied to its location within its arena.
        A combinatorial neural code is a collection of $0/1$-vectors which encode the patterns of co-firing activity among the place cells.
        Gross, Obatake, and Youngs have recently used techniques from toric algebra to study when a neural code is $0$- $1$-, or $2$-inductively pierced: a property that allows one to reconstruct a Venn diagram-like planar figure that acts as a geometric schematic for the neural co-firing patterns.
        This article examines their work closely by focusing on a variety of classes of combinatorial neural codes. 
        In particular, we identify universal Gr\"obner bases of the toric ideal for these codes.
    \end{abstract}
    
    \section{Introduction}
    
    A \emph{combinatorial neural code} is a set of $0/1$-vectors that is used to model the co-firing patterns of certain neurons in the brain of an animal.
    These neurons are call \emph{place cells} and are active when the animal is in a particular region within its environment, called a \emph{place field}, or simply just a \emph{field}.
    Here, we are not concerned with timing and spiking of neural activity; we consider only the case where the place cell is considered ``on'' or ``off.''
    
    Recall that \emph{Venn diagram} is a diagram consisting of regions bounded by $n$ simple, closed curves such that all possible intersections of the curves' interiors appear.
    An \emph{Euler diagram} is a generalization of a Venn diagram, where the curves' interiors do not need to intersect in all possible ways. 
    Consider the following Euler diagram, where $U_i$ refers to the interior of the (innermost) curve in which the label is contained:
    \begin{center}
    	\begin{tikzpicture}
    		\draw (-3,0.5) -- (3,0.5) -- (3,4.5) -- (-3,4.5) -- cycle;
    		\draw (-1,2.5) circle [radius=1.5];
    		\draw (-0.75,2.5) circle [radius=.75];
    		\draw (1,2.5) circle [radius=1.5];
    			
    		\node at (-1.9,3.2) {$U_1$};
    		\node at (-1.1,2.5) {$U_2$};
    		\node at (2,2.5) {$U_3$};
    	\end{tikzpicture}
    \end{center}
    This diagram models the co-firing patterns of three place cells, and the regions in which the place cells are active are inside of the three circles.
    The labels $U_1$, $U_2$, and $U_3$ are the interiors of the three curves, and the regions of the diagram can be encoded by triples of $0$s and $1$s, indicating which of the place cells are active.
    We use the standard convention of using $0$ to denote an inactive neuron and $1$ to denote an active neuron.
    So, for example, the codeword $101$ corresponds to the intersection $U_1 \cap U_2^c \cap U_3$, where $U_2^c$ denotes the complement of $U_2$ in the diagram.
    The full neural code is
    \begin{equation}\label{ci}
    	\cC = \{000, 100, 001, 110, 101, 111\}.
    \end{equation}
    
    \commentout{We will be interested in rings of polynomials which are generated by such codes. For $\cC$ defined in \eqref{ci} the associated ring is 
    \[  R_1 = \CC[x,z,xy,xz,xyz].  \]
    (note the exclusion of $1 = x^0y^0z^0$).
    Using this notation, $R_1$ is the closure of $x,z,xy,xz,xyz$ under products and sums.
    Examples of some elements are
    \[ 1,\, x, \, x+z, \,x + xy, \dots , \, x^5,\dots, \, 4+ 3(xy)^6 + (xz)^2, \textnormal{ etc}\dots \in R_1 \]
    
    In fact, $R_1$ can be written more succinctly:
    \[  \CC[x,z,xy] \cong R_1  \]
    since there is no need to write $xz$ if we have both $x$ and $z$.
    If we label these generators as $t_1 = x$, $t_2 = z$, and $t_3 = xz$, then the relationship among these generators, called a \emph{syzygy}, is $t_1t_2 = t_3$.
    It is precisely the syzygies of these rings, we will be interested in later.
    
    \begin{defn}
    	A set $S \subseteq \RR^n$ is \emph{convex} if, for every $u,v \in S$, the line segment
    	\[
    		\{ tu + (1-t)v \mid 0 \leq t \leq 1\}
    	\]
    	is contained entirely in $S$.
    \end{defn}
    }
    
    Intuitively, convex sets do not have holes or dents; a disc is convex, but a circle is not.
    If a code has an Euler diagram consisting of convex sets, then the code is called \emph{convexly realizable}.
    More generally, one may ask whether every code is convexly realizable in $\RR^n$ for some $n$ rather than just $\RR^2$. 
    The answer is yes \cite{FrankeMuthiah}, but it is much less clear how to determine the smallest $n$ needed.
    In this article, we focus on convex realizability in $\RR^2$ only, turning neural codes into algebraic objects, with the goal of deducing information about the code via algebraic techniques.
    
    \subsection{Models and results}

     An Euler diagram $\cD$ is a collection of sets $\cD = \{U_1,...,U_n\}$, which we refer to as place fields, and where each field  $ U_i$ is a subset of $ \bbR^2$. The sets $U_i$ are sufficiently nice, with boundaries $\lambda_i = \partial U_i $ which are piecewise smooth curves. We will label the set of boundary curves $\Lambda = \{\lambda_1,..,\lambda_n\}$. Our next step is to label the connected components of $\bbR^2 \setminus(\lambda_1 \cup \cdots \cup \lambda_n)$. Thus, for any codeword $w \in \{0,1\}^{[n]}$
     the associated \emph{zone} is defined as 
     \[ Z_w = \left( \bigcap_{i:w_i = 1}  U_i \right) \cap \left( \bigcap_{i:w_i = 0}  U_i^c \right).  \]
     The code for $\cD$ is the set
     \[ \cC_{\cD} = \{  w \in \{0,1\}^{[n]}  : Z_w \neq \emptyset \}  \]
     and the zones are collected in the set $\cZ_{\cD} = \{ Z_w : w \in \cC \}$.

    \begin{defn}\label{wf}
        An Euler diagram is {\it well-formed} if
        \begin{enumerate}
            \item each curve label is used exactly once,
            \item all curves intersect at exactly  0 or 2 points,
            \item each point in the plane is passed by at most 2 curves, and 
            \item each zone  is connected.
        \end{enumerate}
        If $\cC$ is a code with a well-formed Euler diagram, then we call $\cC$ a \emph{well-formed code}.
    \end{defn}
    
    Requiring a diagram to be well-formed can be partly thought of as insisting that the curves in the diagram intersect ``generically enough,'' as long as the zones stay connected.
    It follows from Definition \ref{wf} that the zones are the exactly the connected components of $\RR^2 \setminus (\lambda_1\cup \cdots \cup \lambda_n)$. 
    
    \subsubsection{Special types of diagrams}
    
    In this work, we are able to draw conclusions from diagrams of limited complexity. This is specified by allowing diagrams of limited {\it depth} or diagrams constructed of {\it zero-} or {\it one-piercings}.
    
    \begin{defn}
        Let $\cD$ be a well-formed Euler diagram.
       A curve $\lambda = \partial U$ is a \emph{$0$-piercing} of $\cD$ if,  for all $i \in [n]$, $U \subset U_i$, $U \supset U_i$, or $U \cap U_i = \emptyset$.
       A curve $\lambda = \partial U$ is a \emph{$1$-piercing} of $\cD$ if there is exactly one $j \in [n]$ so that the sets $U\cap \lambda_j $, $U \setminus U_j$, and $U_j \setminus U $ are nonempty.
      
       We say $\cD$ is \emph{$0$-inductively pierced} if there exists some labeling of the curves $\lambda_1,\lambda_2,\dots, \lambda_n$ so that for each $k \in [n-1]$, $\lambda_{k+1}$ is a $0$-piercing of the diagram $\cD_k = \{U_1,..,U_k\}$. Similarly, a diagram $\cD$ is \emph{$1$-inductively pierced} if there is a labeling of the curves so that for each $k \in [n-1]$, $\lambda_{k+1}$ is a $0$- or $1$-piercing of the diagram $\cD_k$.
    \end{defn}
    
    These $0$- and $1$-inductrively pierced diagrams are special cases of $k-$inductively pierced diagrams, where $k$ may be any nonnegative integer. 
    In this paper we focus only on the $k = 0,1$ cases but we refer to \cite{TheoryPiercings} for further background on inductively pierced codes.
    We now define the depth of a diagram, which will be an important factor in the results we present.
    
    \begin{defn}
        A field $ U \in \cD $ is of \emph{} $d$ if there are $U_{i_1},..,U_{i_d} \in \cD$ such that 
      \[U \subset U_{i_1}  \subset \cdots \subset U_{i_d}.  \]
      The \emph{depth} of the diagram $\cD$ is the maximum depth over all fields in the diagram. 
    \end{defn}

    \subsubsection{Algebraic construction}

    We will study homomorphisms between polynomial rings generated by variables respectively labeled by the diagram's zones $\cZ$ and the diagram's field labels $\Lambda$. 
    
    For each $w \in \cC$ we identify $w$ with the map $[n] \mapsto \{0,1\}^n$ that sends $i$ to $0$ if $w_i = 0$ and sends $i$ to $1$ otherwise.
    Additionally, we label a variable associated to the zone $Z_w $ by $ t_{ w^{-1}(1) } $. For each $ \lambda \in \Lambda  $ we label a variable associated to the field by $x_\lambda$. We introduce a homomorphism 
    \[ \pi_{\cC}: \bbC[ t_{ w^{-1}(1) } : w \in \cC ] \to  \bbC[ x_{ \lambda } : \lambda \in \Lambda ]  \]
     via the mapping
     \[ \pi_{\cC}(t_{w^{-1}(1)}) \mapsto x^{w} = \prod_\lambda x_\lambda^{w_\lambda}  \]
    The primary object we are interested in is the \emph{toric ideal} $I_{\cC}$, defined as the kernel of the map $\pi_{\cC}$. Hilbert's Basis Theorem guarantees that $I$ is finitely generated. Moreover, it is not hard to see that the ideal is generated by binomials.  
    We will be especially interested in a set of binomial generators with particular properties.
    
    To define the special generating set, we first note that  monomial order on $\CC[t_1,\dots,t_m]$ is a total ordering $\prec$ of its monomials such that
    \begin{enumerate}
    	\item $\prec$ respects multiplication: if $u,v,w$ are monomials and $u \prec v$, then $uw \prec vw$, and
    	\item $\prec$ is a well-ordering: $1 \prec u$ for all monomials $u$.
    \end{enumerate}
    
    As a first example, we describe a well-known and computationally-efficient order.
    The \emph{graded reverse lexicographic order}, or simply \emph{grevlex} order, on $\CC[t_1,\dots,t_m]$ is denoted by $\lgrevlex$ and sets $t^a \lgrevlex t^b$ if $\sum a_i < \sum b_j$ or if $\sum a_i = \sum b_j$ and the last nonzero entry of $a-b$ is negative.
    While this is not the most intuitive monomial order, it will still be useful to us.
    
    A second useful class of monomial orders on $\CC[t_1,\dots,t_n]$ is the set of weight orders. A \emph{weight order} $\prec_{w,\sigma}$ is determined by a vector $w \in \RR^n$ and an existing monomial order $\prec_\sigma$, and sets $t^a \prec_{w,\sigma} t^b$ if and only if either $w\cdot a < w \cdot b$, where $\cdot$ is the dot product, or $w\cdot a = w \cdot b$ and $t^a \prec_{\sigma} t^b$; for this reason, $\prec_\sigma$ is often informally referred to as the ``tie-breaker.''
    
    Now, given a monomial ordering $\prec$, any polynomial $f$ has a unique initial term which is denoted $\initp(f)$. This further leads to the initial ideal of a given ideal $I$ defined as 
    \[ \initp(I) = \{ \initp(f) : f \in I\}. \]
    If $I = (g_1,..,g_k)$, it is not necessarily true that $(\initp(g_1),..., \initp(g_k))$ equals $\init(I)$. 
    
    \begin{defn}
      Let $\cG = \{g_1,\dots,g_k\}$ be a generating set of an ideal $I$ of $\CC[x_1,\dots,x_n]$ and let $\prec$ be a monomial order.
      If
      \[
        \initp(I) = (\initp(g_1),\dots,\initp(g_k)
      \]
      then we call $\cG$ a  \emph{Gr\"obner basis} of $I$ with respect to $\prec$. 
      Moreover, we say a $\cG$ is \emph{reduced} if the leading coefficient (with respect to $\prec$) of every element is $1$ and if, for every $g,g' \in \cG$, $\initp(g)$ does not divide any term of $g'$.
    \end{defn}
    
    Although there might be  many Gr\"obner bases for a given ideal and monomial order, there is a unique reduced Gr\"obner basis. Finally, we say that a Gr\"obner basis $\cG$ is a \emph{universal Gr\"obner basis} if it is a Gr\"obner basis with respect to any monomial order. Because $I$ has finitely many initial ideals, we can always construct a finite universal Gr\"obner basis by taking the union of all reduced Gr\"obner bases of $I$. We will call this union {\it the} universal Gr\"obner basis. 
    
    In \cite{GrossObatakeYoungs}, the authors successfully found ways to algebraically detect when a code is $k$-inductively pierced for small $k$ and/or few neurons.
    The main results of that article are summarized in the theorem below.
    
    \begin{thm}[see \cite{GrossObatakeYoungs}]\label{thm: GOY}
    	Let $\cC$ be a well-formed code on $n$ neurons such that each neuron fires at least once, that is, there is no $i$ for which $w_i = 0$ for all codes $w \in \cC$.
    	\begin{enumerate}
    		\item The toric ideal $I_{\cC} = (0)$ if and only if $\cC$ is $0$-inductively pierced.
    		\item A well-formed diagram $\cD$ is inductively $0$-pierced if and only if no two curves in any well-formed realization of $\cD$ intersect.
    		\item If $\cC$ is $1$-inductively pierced then the toric ideal $I_{\cC}$ is either generated by quadratics or $I_{\cC} = (0)$.
    		\item When $n=3$, the code is $1$-inductively pierced if and only if the reduced Gr\"obner basis of $I_{\cC}$ with respect to the weighted grevlex order with the weight vector $w = (0, 0, 0, 1, 1, 1, 0)$ contains only binomials of degree $2$ or less.
    	\end{enumerate}
    \end{thm}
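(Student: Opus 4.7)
The plan is to prove the four parts in sequence, since parts (3) and (4) lean on (1) and (2). I would start with (2), which is the most purely geometric: by definition, a $0$-piercing of $\cD$ is a curve whose interior is nested with or disjoint from every existing field. Inducting on $n$, a $0$-inductively pierced diagram is built by adding such curves one at a time, so no two curves ever intersect. Conversely, if no two curves intersect in any well-formed realization, the containment relation on fields is a forest; picking any order compatible with containment (outermost curves first) exhibits a labeling along which each $\lambda_{k+1}$ is a $0$-piercing of $\cD_k$.

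For part (1), I would translate $I_{\cC}=(0)$ into a statement about the semigroup generated by $\{w : w \in \cC\} \subseteq \ZZ^n$. Since $I_\cC$ is generated by the binomials $t^{a}-t^{b}$ with $\sum_{w} a_w w = \sum_{w} b_w w$, vanishing of $I_\cC$ is equivalent to this semigroup being free on the codewords. For the ``if'' direction, I would check directly that at each step of a $0$-inductive construction, the new codewords contributed by adding $\lambda_{k+1}$ are either a single new standard basis vector (disjoint $0$-piercing) or codewords that agree with a subset of existing ones in all old coordinates while the new coordinate is determined, and either operation preserves freeness of the semigroup. The converse uses part (2): if some pair of curves intersect in every realization, I would construct an explicit nontrivial binomial in $I_\cC$ witnessed by the four codewords around that intersection.

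For part (3), the plan is again induction on the number of neurons, with the inductive step adding a $1$-piercing $\lambda_{n+1}$ of $\cD_n$ that pierces exactly $\lambda_j$. The new codewords are obtained from the old code by a very restricted rule: each $w \in \cC_{\cD_n}$ contributes up to two codewords differing only in the last coordinate, and a new relation $t_\alpha t_\beta = t_\gamma t_\delta$ appears for each pair of zones straddling $\lambda_j$. The hard part—and the main obstacle in the whole theorem—is showing that \emph{every} element of the kernel of the new $\pi_{\cC}$ is generated by these explicit quadratic binomials together with those from $\cC_{\cD_n}$, which by induction are also quadratic or absent. I would reduce this to a Markov-basis-style argument: given any binomial $t^a-t^b$ in $I_\cC$, strip off the new variable using the quadratic relations one exchange at a time until the remaining binomial lies in $I_{\cC_{\cD_n}}$, to which the induction applies.

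Part (4) I would handle by a finite case analysis. For $n=3$ the number of well-formed codes with every neuron firing is small enough to enumerate by hand (classifying the possible pairwise interactions: disjoint, nested, or overlapping), and for each I would compute the reduced Gr\"obner basis under the weight order with $w=(0,0,0,1,1,1,0)$ and compare. The weight vector is chosen so that codewords of Hamming weight $2$ are penalized against those of weight $1$ or $3$, which biases $S$-polynomial reductions toward producing binomials that witness the $1$-piercing structure. The main subtlety to watch for here is that the weighted grevlex tie-breaker and the specific indexing of the variables interact with which binomials appear as leading terms; I would check by direct calculation that exactly the codes that fail to be $1$-inductively pierced produce a cubic (or higher) element of the reduced basis, matching against the list of diagrams classified in part (3).
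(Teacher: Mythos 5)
This theorem is cited from \cite{GrossObatakeYoungs} and is not proved in the present paper --- it appears only as a summary of prior results (``The main results of that article are summarized in the theorem below''), so there is no in-paper proof to compare your proposal against. Your outline is a plausible sketch of how such a proof would go, and parts (1) and (2) are on the right track: translating $I_\cC = (0)$ into freeness of the semigroup, and observing that $0$-piercings never introduce curve crossings. The main caution is in part~(3), which you correctly identify as the crux: the ``strip off the new variable using quadratic exchanges'' step is an assertion, not an argument. A Markov-basis reduction of this kind requires showing that any monomial in the kernel can be rewritten using only the exchange relations introduced by the $1$-piercing, and the cited paper spends substantial effort controlling which zones a $1$-piercing can create and how the old code sits inside the new one; without that control, your induction could stall on a binomial that uses the new variable in both terms in an unbalanced way. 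For part~(4), the weight vector $(0,0,0,1,1,1,0)$ presupposes a fixed indexing of the seven nonzero codewords on three neurons (by Hamming weight, then lexicographically), so your case analysis should pin that indexing down explicitly before computing, since the degree of elements in the reduced Gr\"obner basis is sensitive to both the weight vector and the grevlex tie-breaker. If the aim is to use this theorem downstream, the correct move is simply to cite \cite{GrossObatakeYoungs}, as the paper does.
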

    
    The authors further made the following conjecture.
    
    \begin{conj}[see \cite{GrossObatakeYoungs}]
    	For each $n$, there exists a monomial order such that a code is $0$- or $1$-inductively pierced if and only if the reduced Gr\"obner basis contains binomials of degree $2$ or less.
    \end{conj}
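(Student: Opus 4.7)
The plan is to construct a monomial order for general $n$ that directly generalizes the weight vector $(0,0,0,1,1,1,0)$ used in Theorem~\ref{thm: GOY}(4). When the variables $t_S$ are ordered by the support $S \subseteq [3]$, this vector assigns weight $0$ to every singleton and to the full support $[3]$, and weight $1$ to every doubleton. A natural candidate for general $n$ is therefore a weight order $\prec_{w,\grevlex}$ in which $w(t_S)$ depends only on $|S|$, with the weight function chosen so that singletons and the full support are distinguished from intermediate-cardinality supports. I would begin by empirically testing several such weight functions (for instance indicator-type functions of proper, non-singleton supports) against small examples computed in \texttt{Macaulay2}, then fix the first candidate that conjecturally separates $0$- and $1$-inductively pierced codes from the rest in every dimension.

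The forward direction (inductively pierced $\Rightarrow$ reduced basis of degree $\leq 2$) I would handle by induction on $n$, following the inductive definition of the class. If $\lambda_{n+1}$ is a $0$-piercing of $\cD_n$, then the new codewords add a controlled set of variables and the toric ideal extends from $I_{\cC}$ restricted to $\cD_n$ by monomial substitutions that preserve the degree bound. If $\lambda_{n+1}$ is a $1$-piercing witnessed by some $\lambda_j$, the only new relations arise from splitting a single zone along $\lambda_j$ and take the quadratic form $t_A\, t_{B \cup \{n+1\}} - t_{A \cup \{n+1\}}\, t_B$. A Buchberger $S$-pair check, carried out under the proposed weight order, should then show that the union of these new quadratics with (a suitably transported version of) the inductive basis remains Gr\"obner.

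The backward direction is the main obstacle. Here the natural tool is that every circuit of the toric matrix lies in the universal Gr\"obner basis of $I_{\cC}$, so for each circuit of degree at least $3$ there exists at least one monomial order under which that circuit appears in the reduced basis. The geometric content of the conjecture is that when $\cC$ fails to be $1$-inductively pierced some $k$-piercing with $k \geq 2$ is forced, producing at least four zones whose incidence vectors admit no nontrivial degree-$2$ dependence but do admit a cubic one; this cubic should be a circuit of degree at least $3$ in $I_{\cC}$. The delicate part is to verify that our specific weight order actually selects such a cubic into the reduced basis and to do so uniformly across all possible obstructions, since a code can fail to be $1$-inductively pierced in combinatorially inequivalent ways.

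A reasonable overall attack is therefore to first verify the conjecture in the cases $n = 4$ and $n = 5$ by direct computation, read off both the correct weight function and the shape of the offending cubic circuits, identify a combinatorial invariant of the diagram (likely the depth, or the minimum cardinality of a forbidden sub-diagram) that controls the minimum circuit degree, and then lift the argument to general $n$ by induction on this invariant. If the weight function for different $n$ cannot be packaged uniformly, the conjecture still only demands existence, so a tailored weight for each $n$ would suffice.
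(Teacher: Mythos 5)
This statement is stated in the paper as a \emph{conjecture} (attributed to Gross--Obatake--Youngs), not a theorem, and the paper offers no proof of it. There is therefore nothing in the paper to compare your attempt against; the correct reading of the source is that this remains open.

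What you have written is a research sketch rather than a proof, and the gaps are not cosmetic. First, no monomial order is actually produced: you propose to ``empirically test'' weight functions and fix the first that works, which is precisely the existence claim the conjecture asserts rather than an argument for it. Second, the forward direction does not reduce to Theorem~\ref{thm: GOY}(3) the way the sketch suggests. Knowing $I_\cC$ is generated by (homogeneous) quadratics does not tell you that the \emph{reduced} Gr\"obner basis under your order has degree $\le 2$; even for quadratically generated homogeneous toric ideals the reduced basis can contain higher-degree elements for many term orders, so the ``Buchberger $S$-pair check\dots{} should then show'' step is carrying the whole argument and is not carried out. Third, the backward direction asserts without argument that every failure of $1$-inductive piercedness forces a circuit of degree $\ge 3$ and, more delicately, that your yet-unspecified order must place such a circuit in the reduced basis --- membership in the universal Gr\"obner basis only gives you that \emph{some} order does, not that yours does. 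Finally, the induction on $n$ in the forward direction implicitly assumes that the toric ideal of $\cD_{n}$ sits inside that of $\cD_{n+1}$ in a way compatible with your weight order, but adding a curve changes the ambient polynomial ring and the weights of old variables; that compatibility would need to be proved. In short, the proposal identifies the right landscape (weight orders, circuits, the Theorem~\ref{thm: GOY}(4) template) but does not close any of the three substantive gaps, which is consistent with the fact that the statement is still a conjecture.
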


    \subsubsection{Graphical construction}
    
      Any Euler diagram can be associated to a graph. Recall a graph is a pair $(\cV,\cE) $ where  $\cV$  is a set of vertices and $\cE$ is a set of two-element subsets of $\cV$. 
    \begin{defn}[Dual graphs]
        Given a code $\cC$ with $m = |\cC|$ elements, we define a \emph{dual graph} $G_\cC$ whose vertices are labeled uniquely by elements of the code $\cC$. A pair $\{w_1,w_2\}$ is an edge if and only if zones $Z_{w_1}$ and $Z_{w_2}$ have a nontrivial intersection of the boundary, i.e. $\partial Z_{w_1}\cap \partial Z_{w_2}\neq \emptyset $. For our purposes, these dual graphs we will always include a vertex labeled with the codeword $0\dots0$.
    \end{defn}
    
    Furthermore, we can define the obvious inclusion mapping $\iota:\cC \xhookrightarrow{}\bbZ^n$.  Then a weight function (a distinct notion from weighted monomial orders) can be introduced by setting
    \begin{equation}\label{mudef}
         \mu(  w ) = \| \iota\circ w\|_1.  
    \end{equation}  
    \begin{defn}
        [Weighted  dual graphs]
        A \emph{weighted dual graph} is a triple $(\cV,\cE, \mu)$ such that $(\cV,\cE)$ is a dual graph and $\mu$ is a mapping as in \eqref{mudef}.
    \end{defn} 
    Notice an edge between two nodes $w_1$ and $w_2$ exists only if
    \[  \| \iota\circ w_1 - \iota\circ w_2 \|_1 = 1 . \]
    With some abuse of notation, we can extend the definition of $\mu$ to monomials:
    \begin{equation}\label{mudef ext}
        \mu\left(\prod_{w \in \cC}t_{w^{-1}(1)}^{a_w} \right) 
        = \left\| \sum_{w \in \cC} a_w \iota \circ w  \right\|_1. 
    \end{equation}     
    We say a binomial is of \emph{weight} $k$ if all the terms are of weight $k$.
    Determining the binomials in the toric ideal of a code from its Euler diagram can be reduced to finding specific subgraphs of  the dual graph. We will now define subgraph embeddings. 
    
    \begin{defn}
        [Weighted graph embeddings] We say the weighted dual graph $H = (\cV,\cE,\mu)$ is \emph{embedded} in the weighted dual graph $G = (\cW, \cF, \nu)$ if there is a one-to-one mapping $\phi: \cV \to \cW$ such that $\mu(x)= \nu(\phi(x))$ for all $x \in \cV$ and the pair $\{\phi(x),\phi(y)\}$ belongs to $\cF$ for all $\{x,y\}\in \cE$.
    \end{defn}
    
    Notice we can extend any graph embedding $\phi$ to a map on the associated polynomial rings. Explicitly, let $H$ be a graph constructed from the neural code $\wtil \cC$ which  embeds via $\phi$ into a graph $G$ constructed from the neural code $\cC$. Let $\jmath$ (respectively $\wtil \jmath$) map codewords in $\cC$ (respectively $\wtil \cC$) to nodes of the graph $G$ (respectively $H$). Define the mapping
    \[ \phi:  \bbC[ t_{ v^{-1}(1) } : v \in \wtil \cC ] \to     \bbC[ t_{ w^{-1}(1) } : w \in  \cC ]     \]    
    such that $ \phi(t_{v^{-1}(1)}) = t_{w^{-1}(1)} $ if and only if $ \phi (\wtil (\jmath) v) = \jmath ( w ) $.

    \subsection{Results}
    
    Here we will summarize the main results of this article.
    First, we give two more definitions. 
    
    \begin{defn}
        Given a code $C$ let
        \[ \calA_\cC 
        = \{ t_{w^{-1}(c_1)} t_{w^{-1}(c_2)} - t_{w^{-1}(c_3)} | c_1,c_2,c_3\in \cC : c_1 + c_2  = c_3; \mu(c_1) = \mu(c_2) = 1 \} \]
    \end{defn}
    \begin{defn}
    A code $\cC \subset \{0,1\}^{[n]}$ is \emph{external} if there are $n$ unique codewords $w_i$ such that $\mu(w_i) = 1$.
    \end{defn}
    
    \begin{thm}\label{Thm Aext}
        For an external code $\cC$, the indispensable binomials (defined in the next section) of $I_\cC$ are exactly those in $\calA_\cC$.
    \end{thm}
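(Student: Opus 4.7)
The plan is to prove both containments $\calA_\cC \subseteq \{\text{indispensable binomials of } I_\cC\}$ and $\{\text{indispensable binomials of } I_\cC\} \subseteq \calA_\cC$ using the standard fiber criterion: a primitive binomial $t^u - t^v \in I_\cC$ is indispensable if and only if the $\pi_\cC$-fiber $\{t^w : \pi_\cC(t^w) = \pi_\cC(t^u)\}$ consists of exactly the two monomials $t^u$ and $t^v$. Throughout I would either omit the variable $t_\emptyset$ attached to the codeword $0$ or work modulo $t_\emptyset - 1$, since $\pi_\cC(t_\emptyset) = 1$ otherwise makes every fiber infinite.

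For the forward direction, I would fix $f = t_{\{i\}} t_{\{j\}} - t_{\{i,j\}} \in \calA_\cC$, so in particular $e_i + e_j \in \cC$, and compute the fiber of $x_i x_j$ directly. A monomial $\prod_c t_{c^{-1}(1)}^{a_c}$ maps to $x_i x_j$ exactly when $\sum_c a_c\, \iota(c) = e_i + e_j$ in $\ZZ^n$. For each $k \notin \{i,j\}$, the coordinate-$k$ equation forces $a_c = 0$ on every codeword touching coordinate $k$, restricting the support to the codewords supported inside $\{i,j\}$, namely $e_i$, $e_j$, and $e_i + e_j$. Reading off the $i$- and $j$-coordinate equations then pins down exactly the two solutions $(a_{e_i}, a_{e_j}, a_{e_i+e_j}) = (1,1,0)$ and $(0,0,1)$, so the fiber contains only these two monomials and $f$ is indispensable.

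For the reverse direction, let $f = t^u - t^v$ be indispensable, hence primitive, and set $d = \sum_c u_c\, \iota(c) \in \ZZ_{\geq 0}^n$. The external hypothesis guarantees that the fully-decomposed monomial $M := \prod_{k=1}^n t_{\{k\}}^{d_k}$ lies in the same fiber, so indispensability forces $M \in \{t^u, t^v\}$; relabel so that $t^v = M$. Primitivity then prevents $t^u$ from sharing any $t_{\{k\}}$-factor with $t^v$, so every variable appearing in $t^u$ is of the form $t_{c^{-1}(1)}$ with $\mu(c) \geq 2$. If $t^u$ contained at least two such factors (counted with multiplicity), then replacing a single copy of some $t_{c_1^{-1}(1)}$ in $t^u$ by the product $\prod_{k \in c_1^{-1}(1)} t_{\{k\}}$ would produce a third fiber monomial distinct from both $t^u$ and $t^v$, contradicting indispensability. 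Hence $t^u = t_{c^{-1}(1)}$ for a single codeword $c$ with $\mu(c) \geq 2$, and correspondingly $t^v = \prod_{k \in c^{-1}(1)} t_{\{k\}}$.

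The final step — showing $\mu(c) = 2$, after which $f \in \calA_\cC$ follows immediately — is where I expect the main obstacle. If $\mu(c) \geq 3$, the natural move is to exhibit a third fiber monomial $t_{c'^{-1}(1)} \cdot \prod_{k \in c^{-1}(1) \setminus c'^{-1}(1)} t_{\{k\}}$ for some intermediate codeword $c' \in \cC$ with $\emptyset \subsetneq c'^{-1}(1) \subsetneq c^{-1}(1)$ and $\mu(c') \geq 2$, once again contradicting indispensability. The bare externality hypothesis does not by itself produce such a $c'$, so the argument must leverage the richer combinatorics underlying $\cC$ — most naturally the connectivity of the weighted dual graph $G_\cC$ coming from a well-formed Euler realization, which geometrically ensures that a zone $Z_c$ with $\mu(c) \geq 3$ borders nonempty zones whose codewords lie strictly between $0$ and $c$ in the coordinatewise order. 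Translating this geometric adjacency into the required intermediate codeword should close the argument.
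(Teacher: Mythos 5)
Your proposal takes a genuinely different route from the paper. The paper never invokes the fiber criterion; instead it (i) proves indispensability of $\calA_\cC$ by a degree argument (a combination of binomials all of whose terms have degree $\geq 2$ cannot produce the linear term $t_{w^{-1}(c_k)}$), (ii) identifies the reduced Gr\"obner basis $\calB = \{ t_{w^{-1}(c)} - \prod_{j \in \supp(c)} t_{w^{-1}(e_j)} : \wt(c) \geq 2 \}$ of $I_\cC$ with respect to the weight order $\prec_\omega$, and (iii) shows that of the elements of $\calB$, only those in $\calA_\cC$ also survive into the grevlex reduced Gr\"obner basis. Since any indispensable binomial must lie in every reduced Gr\"obner basis, it lies in $\calB$, and then the grevlex comparison forces it into $\calA_\cC$. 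Your approach, by contrast, works entirely at the level of fibers of $\pi_\cC$ and never computes a Gr\"obner basis; your forward direction (the fiber of $x_ix_j$ has exactly two monomials) is clean, complete, and arguably more transparent than the paper's Proposition~\ref{prop: A indispensable}.

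You have, however, correctly identified a real gap in your reverse direction and you have not closed it. After reducing to $t^u = t_{c^{-1}(1)}$ with $\mu(c) \geq 2$ and $t^v = \prod_{k \in c^{-1}(1)} t_{\{k\}}$, you must still rule out $\mu(c) \geq 3$, and you acknowledge that externality of $\cC$ alone does not hand you the intermediate codeword $c'$ with $\emptyset \subsetneq c'^{-1}(1) \subsetneq c^{-1}(1)$ and $\mu(c') \geq 2$ that would yield a third fiber element. Your sketch -- that $\partial Z_c$ must meet some $\lambda_j$ with $j \in \supp(c)$, so the neighboring zone across that arc has support $\supp(c)\setminus\{j\}$ -- is the right geometric idea, but it needs to be made precise using the well-formedness conditions of Definition~\ref{wf} before the proof is finished. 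It is worth observing that this is exactly the point where the paper's own Lemma~\ref{lem: elements of B in A} asserts, with the single phrase ``Since $\cC$ is external, we know that for some $t_{e_i},t_{e_j}$ in the nonlinear term of $g$, $g' = t_{c'}-t_{e_i}t_{e_j}$ is in $\calA_\cC$,'' a claim that likewise rests on the geometry of a well-formed realization rather than on externality in isolation; so you have not found a shortcut, but you have correctly isolated the crux that both arguments share.
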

      
     In the last section we define a class of codes called \emph{internal codes}. 
     For these, we are able to find generating sets that are Gr\"obner bases for all term orders. 
     In fact, these \emph{universal Gr\"obner bases} consist entirely of quadratic binomials.

     Before we proceed to the proof of Theorem \ref{Thm Aext}, we will examine 1-inductively pierced diagrams of maximum depth 1 in Section \ref{depth1}. For such diagrams we can identify exactly the set of indispensable binomials. These binomials are specified by their weighted graph embeddings which are depicted in Table \ref{table dep1}.

\commentout{ 
    \subsection{Toric Ideals}
    
    A \emph{Laurent polynomial over $\CC$ in the variables $x_1,\dots,x_n$} is an expression of the form
    \[
    	\sum_{a = (a_1,\dots,a_n) \in \ZZ^n} c_ax_1^{a_1}\cdots x_n^{a_n}
    \]
    where $c_a \in \CC$ for each $a \in \ZZ^n$ and $c_a = 0$ for all but finitely many $a$. 
    The set of Laurent polynomials forms an algebra and is denoted $\CC[x_1^{\pm 1},\dots,x_n^{\pm 1}]$.
    Essentially, a Laurent polynomial is a polynomial which is allowed to have negative integer exponents. 
    
    Now, given a semigroup $S \subseteq \ZZ^n$, the \emph{semigroup algebra generated by $S$} is 
    \[
    	\CC[S] := \CC[x^a \mid a \in S] \subseteq \CC[x_1^{\pm 1},\dots,x_n^{\pm 1}],
    \]
    where, if $a = (a_1,\dots,a_n)$, then $x^a := x_1^{a_1}x_2^{a_2}\cdots x_n^{a_n}$.
    So, we have translated $S$, which only has one available operation, into an algebra, which has two available operations.
    The addition in $S$ corresponds to multiplication of monomials in $\CC[S]$, but it is less clear what addition of monomials in $\CC[S]$ translates to in $S$ -- this will be exploited later.
    
    Although polynomial rings are very familiar objects, it is hard to deal with a semigroup algebra directly.
    Part of this is because there is no longer necessarily a \emph{unique} way to add and multiply generators together to obtain polynomials.
    
    To demonstrate, let $A = \{(1,3),(2,2),(3,1)\}$.
    The corresponding semigroup algebra over $\CC$ is 
    \[
    	\CC[\NN A] = \CC[x_1^ax_2^b \mid (a,b) \in \NN A] = \CC[x_1x_2^3, \, x_1^2x_2^2, \, x_1^3x_2].
    \]
    Notice that the monomial $x_1^4x_2^4$ can be obtained as product of the monomial generators in two ways: through the product $(x_1x_2^3)(x_1^3x_2)$ or through $(x_1^2x_2^2)^2$.
    This is very very different from just $\CC[x_1,\dots,x_n]$, where there is exactly one way to take sums and products of generators to obtain elements of the algebra.
    This turns out to be a huge complication!
    
    In order to get more control over semigroup algebras, we want to keep track of how their generators produce elements.
    We do so in the following way: again suppose $A = \{a_1,\dots,a_m\} \subseteq \ZZ^n$, and $A$ is a set of minimal generators of $\NN A$.
    For each $a_i \in A$ let us associate a term $ x^{a_i} \equiv x_1^{a_{i,1}}\dots x_n^{a_{i,n}}$ and represent it by a variable  $t_i$.
    We thus have the homomorphism
    \[
    	\pi_A: \CC[t_1,\dots,t_m] \to \CC[\NN A]
    \]
    defined by $\pi_A(t_i) = x^{a_i}$ 
    By construction $\pi_A$ is surjective,
    hence, by the First Isomorphism Theorem,
    \[
    	\CC[t_1,\dots,t_m]/I_A \iso \CC[\NN A].
    \]
    where $I_A = \ker \pi_A$.
    
    The ideal $I_A$ is called the \emph{toric ideal} of $A$, and contains an astounding amount of information about our original lattice points $A$.

    An ideal $I$ of a ring $R$ is \emph{finitely generated} if there exist a finite number of elements $g_1,\dots,g_k \in I$ such that 
    \[
    	I = \{ \sum_{i = 1}^k r_ig_i \mid r_i \in R\}.
    \]
    When this happens, we write $I = (g_1,\dots,g_k)$ and say that $g_1,\dots,g_k$ are \emph{generators} of $I$.
    There are typically many generating sets of an ideal, and rarely is there a ``natural'' choice of generators.
    Since $\CC$ is a field, Hilbert's Basis Theorem tells us that every ideal of $\CC[t_1,\dots,t_m]$ is finitely generated.
    In particular, toric ideals are finitely generated. 
    
    Here are some additional facts about toric ideals:
    \begin{enumerate}
    	\item Toric ideals are generated by binomials of the form $t^a - t^b$ where $\pi(t^a) = \pi(t^b)$. 
    	\item If $A \subseteq \ZZ^n$ is finite and generates $\NN A$, then $I_A$ is generated by homogeneous polynomials (every monomial has the same total degree) if and only if every element of $A$ lies on a hyperplane that does \emph{not} pass through the origin.
    \end{enumerate}
    
    In order to extract more useful information from a toric ideal, we have to introduce the notion of a Gr\"obner basis.
    }

\commentout{
\section{Toric Algebra and Weight Vectors}
\subsection{Toric Ideals}
    
    Given a code $\cC$ on $n$ neurons, set
    \[
    	\CC[\cC] := \CC[x^c \mid c \in \cC \setminus \{0\dots0\}] \subseteq \CC[x_1,\dots,x_n]
    \]
    where, if $c = c_1\dotsc_n$, then $x^c := x_1^{c_1}x_2^{c_2}\cdots x_n^{c_n}$.
    While $\CC[\cC]$ is not a polynomial ring, it is a quotient of a polynomial ring.
    Indeed, let $T_\cC = \CC[t_{w^{-1}(c)} \mid c \in \cC \setminus \{0\dots0\}]$ and consider the map
    \[
    	\pi_\cC: T_\cC \to \CC[\cC]
    \]
    defined by setting $\pi_\cC(t_{w^{-1}(c)}) = x^c$ and then extending by homomorphism.  
    By construction, $\pi_\cC$ is surjective, so
    \[
    	\CC[\cC] \iso T_\cC/\ker \pi_\cC.
    \]
    The ideal $\ker \pi_C$, which we will denote by $I_\cC$, is the \emph{toric ideal} of $\cC$.
    Toric ideals can be constructed from a wide variety of combinatorial objects, such as graphs, matroids, and polytopes. 
    In all cases, toric ideals contain an astounding amount of information about the original object that may have been otherwise completely opaque.
    Now, we present some additional facts about toric ideals; see \cite{sturmfels} for more details about toric ideals. 
    
    First, toric ideals are generated by binomials of the form $t^a - t^b$ where $\pi(t^a) = \pi(t^b)$. 
    Moreover, $I_\cC$ is generated by homogeneous binomials if and only if there is some $\omega \in \RR^n$ such that the dot product $c \cdot \omega = 1$ for every nonzero codeword $c$ in $\cC$. 
    There are many more useful facts surrounding toric ideals, but in order to make use of them, we must introduce the notion of a Gr\"obner basis.
    
    \subsubsection{Gr\"obner bases}
    
    A \emph{monomial order}, or \emph{term order} on $\CC[t_1,\dots,t_m]$ is a total ordering $\prec$ of its monomials such that
    \begin{enumerate}
    	\item $\prec$ respects multiplication: if $u,v,w$ are monomials and $u \prec v$, then $uw \prec vw$, and
    	\item $\prec$ is a well-ordering: $1 \prec u$ for all monomials $u$.
    \end{enumerate}
    
    As a first example, we describe a well-known and computationally-efficient order.
    The \emph{graded reverse lexicographic order}, or simply \emph{grevlex} order, on $\CC[t_1,\dots,t_n]$ is denoted by $\lgrevlex$ and sets

    For a second example, let $w \in \RR^m$ and let $\sigma$ denote some monomial order.
    The \emph{weight order} $\prec_{w,\sigma}$ determined by $w$ and $\sigma$ sets $t^a \prec_{w,\sigma} t^b$ if and only if $w\cdot a < w \cdot b$, where $\cdot$ is the dot product.
    Notice that $\lglex = \prec_{(1,\dots,1),\lex}$.
    We cannot guarantee that that either $w\cdot a < w \cdot b$ or $w\cdot b < w \cdot a$, so $\sigma$ is used as a ``tie-breaker'' in order to get a single monomial out.
    	
    If $\prec$ is a monomial ordering of $\CC[t_1,\dots,t_m]$, then each polynomial $f \in \CC[t_1,\dots,t_m]$ has a unique \emph{initial term}, $\initp(f)$, which is $\prec$-first among the monomials of $f$.
    This further leads to the \emph{initial ideal} of an ideal $I$, defined as
    \[
    	\initp(I) = (\initp(f) \mid f \in I)
    \]
    Although $\initp(I)$ typically contains an infinite number of polynomials, it is still an ideal of $\CC[t_1,\dots,t_m]$, so by Hilbert's Basis Theorem, it is finitely generated. 
    
    If $I = (g_1,\dots,g_k)$, it is not necessarily true that $(\initp(g_1),\dots,\initp(g_k))$ equals $\initp(I)$.
    However, \emph{if} this does happen, then we call $\{g_1,\dots,g_k\}$ a \emph{Gr\"obner basis of $I$ with respect to $\prec$}. 
    
    Given a finite set of generators for $I$, Buchberger's Algorithm provides a way to produce a Gr\"obner basis from it. In general, this algorithm will produce a Gr\"obner basis.
    Because this algorithm allows for choices to be made, many resulting Gr\"obner bases are possible.
    However, by imposing additional conditions, we can obtain a canonical choice of Gr\"obner basis.
    Namely, we say a Gr\"obner basis $\cG$ is \emph{reduced} if the leading coefficient (with respect to $\prec$) of every element is $1$ and if, for every $g,g' \in \cG$, $\initp(g)$ does not divide any term of $g'$.
    Although there are many Gr\"obner bases for a given ideal and monomial order, there is a unique reduced Gr\"obner basis. 
    
    Another special kind of Gr\"obner basis will be of use to us.
    We say that a Gr\"obner basis $\cG$ of an ideal $I$ is a \emph{universal Gr\"obner basis} if it is a Gr\"obner basis with respect to any monomial order.
    Because $I$ has finitely many initial ideals, we can always construct a finite universal Gr\"obner basis by taking the union of all reduced Gr\"obner bases of $I$.
    This set is what we will canonically call \emph{the universal Gr\"obner basis} of $I$.
    
    
    
    
    	For each $n$, there exists a monomial order such that a code is $0$- or $1$-inductively pierced if and only if the reduced Gr\"obner basis contains binomials of degree $2$ or less.
}

\section{Depth 1 diagrams}\label{depth1}

    \subsection{Special Binomials}
    
    While computing the reduced Gr\"obner basis of a code is generally very difficult, codes arising from Euler diagrams that fall in certain classes have toric ideals that are easy to describe. In this case, there are certain binomials which are required to be present in a generating set of the ideal.
    More precisely, a binomial $f$ is called \emph{indispensable} if, for any set $\mathcal{B}$ of binomial generators of the ideal, $f \in \mathcal{B}$ or $-f \in \mathcal{B}$. 
    We will introduce several special subgraphs that naturally give rise to the indispensable binomials.
    
    \begin{table}[h]
        \centering
        \begin{tabular}{|  >{\centering\arraybackslash}m{.5in}| *2{>{\centering\arraybackslash}m{1.75in}|} >{\centering\arraybackslash}m{1.25in}|   @{}m{0pt}@{}}
        \hline
        Type & Binomial & Euler diagram & Dual graph 
        \\ \hline 
    1    &  
    $t_{\{1\}}t_{\{2\}} - t_{\{1,2\}}$    &  \includegraphics[page=2,scale=.6]{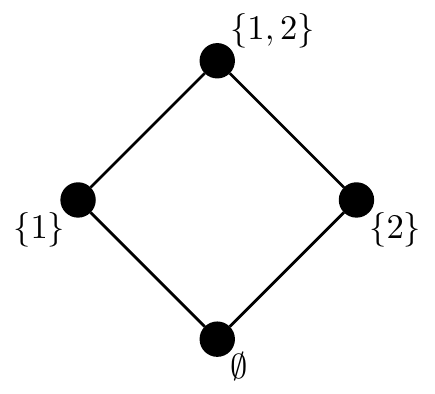} & 
    \includegraphics[page=1,scale = .5]{lozenge.pdf}
        \\ \hline
    2 & $t_{\{2\}}t_{\{1,3 \}} - t_{\{1,2,3\}}$ &
    \includegraphics[page=2,scale=.6]{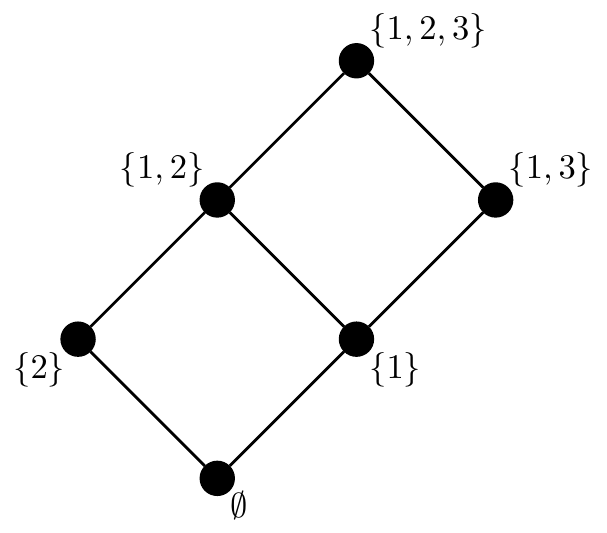} 
     & 
    \includegraphics[page=1,scale = .5]{domino.pdf}  
    \\ \hline
    3 & $t_{\{1,2\}}t_{\{1,3 \}} - t_{\{1\}}t_{\{1,2,3\}}$ &
    \includegraphics[page=2,scale = .6]{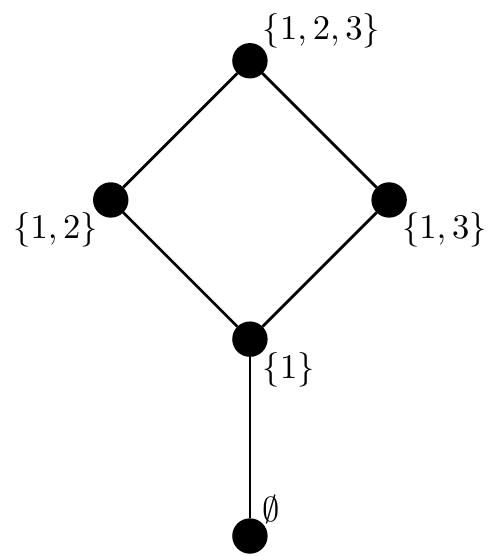} 
    &
    \includegraphics[page=1,scale = .5]{lollipop.pdf}
    \\ \hline
    4 
    & $t_{\{1,2,3\}}t_{\{1,4 \}} - t_{\{1,2\}}t_{\{1,3,4\}}$ 
    &
    \includegraphics[page=2,scale = .6]{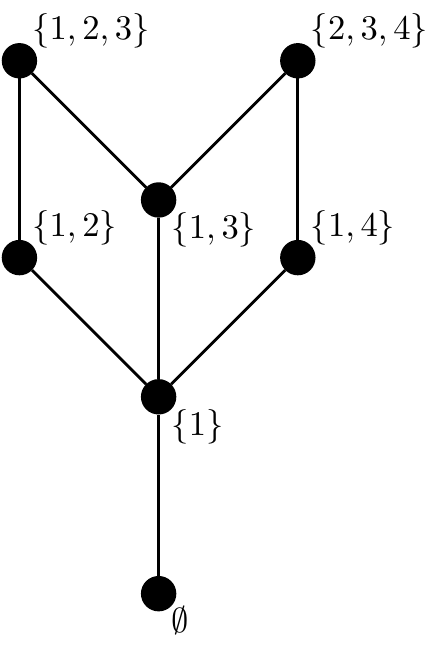} 
    &
    \includegraphics[page=1,scale = .5]{flower.pdf}
    \\ \hline
    5
    & $t_{\{1,2,3\}}t_{\{1,4 \}} - t_{\{1,2\}}t_{\{1,3,4\}}$ 
    &
    \includegraphics[page=4,scale = .6]{flower.pdf} 
    &
    \includegraphics[page=3,scale = .5]{flower.pdf}
    \\ \hline
    6
    & $t_{\{1,2,3\}}t_{\{1,4 \}} - t_{\{1,2\}}t_{\{1,3,4\}}$ 
    &
    \includegraphics[page=6,scale = .6]{flower.pdf} 
    &
    \includegraphics[page=5,scale = .5]{flower.pdf}
    \\ \hline
        \end{tabular}
        \caption{Indispensable binomials for 1 inductively pierced, depth 1 binomials}
        \label{table dep1}
    \end{table}
    
    In the following table we list several  diagrams and their associated dual graphs. Any 1 inductively pierced  diagram with depth less than or equal to 1 has indispensable binomials that are determined in Table \ref{table dep1}.
    
    \begin{thm}\label{thm dep1 1p}
        Let $\cC$ be a neural code such that its associated diagram $\cD$ is a one inductively pierced diagram of depth $\leq 1$. Let  $ G = (\cV,\cE,\mu)$ be the  dual graph associated to $\cD$. If $\wtil \cD$ is a diagram such that it's  dual graph $ H = (\cW,\cF,\nu) $ belongs to Table \ref{table dep1} and has an embedding $\phi$ into $G$, then the image of the associated binomial under $\phi$ is an indispensable binomial for the ideal of $\pi_{\cC}$. Moreover, $\pi_{\cC}$ has no other indispensable binomials.  
    \end{thm}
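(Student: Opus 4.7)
The plan is to characterize the indispensable binomials of $I_\cC$ by reducing the problem to a combinatorial one on the weighted dual graph. Recall that a binomial $f = t^\alpha - t^\beta \in I_\cC$ is indispensable precisely when $\{\alpha,\beta\}$ is a minimal exchange in the fiber $\pi_\cC^{-1}(\pi_\cC(t^\alpha))$: there is no way to rewrite $f$ as a $\bbC[t]$-combination of binomials in $I_\cC$ with strictly smaller supports. Equivalently, indispensable binomials correspond to pairs of monomials that cannot be ``decomposed'' by inserting any additional codeword of $\cC$.

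First I would establish the forward direction: each row of Table \ref{table dep1}, viewed as an embedded subgraph $\phi(H) \subseteq G$, yields a binomial $\phi(f)$ that (i) lies in $I_\cC$ and (ii) is indispensable. Membership in $I_\cC$ is a direct check via \eqref{mudef ext}: the codewords on each side sum as vectors in $\bbZ^n$ to the same value, which is precisely what the weighted graph embedding enforces. For indispensability, I would use the restricted structure of a depth-$\leq 1$, $1$-inductively pierced diagram to argue that no alternative codeword $w \in \cC$ can intervene. Concretely, for each of Types $1$--$6$ there are only a handful of candidate codewords with the correct $\mu$-weight that could appear in a decomposition; I would exhaust these and show that none belong to $\cC$ unless they are already one of the monomials of $\phi(f)$.

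Next I would address the converse: any indispensable binomial $f = t^\alpha - t^\beta \in I_\cC$ arises from one of these embeddings. The key observation is that depth $\leq 1$ forces every codeword to have $\mu$-weight at most $3$, and in fact constrains the combinatorial types of codewords severely (singleton $0$-piercings, $1$-piercings giving weight-$2$ codewords, or the central piece of a $1$-piercing chain giving weight-$3$ codewords). By homogeneity one has $\mu(\alpha) = \mu(\beta)$, and indispensability forces both $\alpha$ and $\beta$ to be squarefree with disjoint supports. I would then do a case analysis on the total degree of $f$ and on the $\mu$-weights appearing: degree $2$ with low weights produces Types $1$--$3$, while degree $2$ with weight-$3$ and weight-$2$ codewords yields the three ``flower'' configurations of Types $4$--$6$, distinguished by whether the supporting $1$-piercings lie on a single curve or on two curves sharing no common piercing.

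The main obstacle, as I see it, is completing the converse — specifically ruling out higher-degree indispensable binomials. I would need to show that any binomial of total degree $\geq 3$ satisfying the balancing condition either fails to lie in $I_\cC$ (because the codeword required for the balance is not realized in $\cC$ under the depth-$\leq 1$ and $1$-inductive piercing hypotheses), or can be rewritten as a $\bbC[t]$-combination of the quadratic binomials from Table \ref{table dep1}, hence is not indispensable. This reduction is where the hypothesis on depth does the real work, since greater depth would permit longer cycles in the dual graph and potentially introduce new indispensable binomials not captured by Table \ref{table dep1}.
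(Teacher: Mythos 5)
Your overall strategy matches the paper's: establish the forward direction (each tabulated type yields an indispensable binomial) and the converse (nothing else is indispensable) by an exhaustive case analysis that leans on the depth-$\leq 1$ and $1$-inductive piercing hypotheses to rule out alternative balancing monomials. The substantive difference is the organizing invariant. You propose stratifying by the total degree of $f$, whereas the paper (Lemmas~\ref{Lemma wg 23}--\ref{Lemma wg 6}) stratifies by the $\mu$-weight of the binomial, treating weights $2,3,4,5$ in turn and then closing off with weight $\geq 6$. Weight is the sharper filter here: under depth $\leq 1$ every codeword has $\mu$-weight at most $3$, so each weight class $\leq 5$ admits only a handful of monomial shapes, and the case exhaustion terminates cleanly at Lemma~\ref{Lemma wg 6}. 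By contrast a fixed degree still allows many weight combinations, so you would end up re-deriving the weight stratification inside each degree case.

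The gap you flag — ruling out higher-degree indispensables — is genuine in your setup but is the one place where the paper's approach (or a citation) closes things cheaply. Two options: (i) follow the paper and show directly that weight-$\geq 6$ binomials are never indispensable, using that the required balancing zones (e.g.\ $\{1,3,5\}$ against $\{2,4,6\}$) cannot coexist with the originals in a $1$-pierced, depth-$\leq 1$ diagram; or (ii) shortcut via Theorem~\ref{thm: GOY}(3): a $1$-inductively pierced code has $I_\cC$ generated by binomials of degree $\leq 2$ (or $I_\cC = (0)$), and since indispensables must occur (up to sign) in \emph{every} binomial generating set, they all have degree $\leq 2$, reducing your converse to a finite check of degree-$\leq 2$ shapes. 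You should also be careful with your assertion that indispensability forces both monomials squarefree with disjoint supports: disjoint supports (coprimality) is correct since indispensables are primitive, but squarefreeness needs a separate argument here (the paper discards $t_{\{1,2\}}^2$ by exhibiting that it has no balancing monomial of the same weight, not by a blanket squarefreeness claim).
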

     
     The theorem is an easy consequence of Lemmas \ref{Lemma wg 23} -- \ref{Lemma wg 6}.
    
    \subsection{Binomials}

    \begin{lemma}\label{Lemma wg 23}
      The toric ideal of a diagram $\cD$ has an indispensable binomial of weight 2 and weight 3 respectively if and only if there is a weighted graph embedding from a Type 1 graph (respectively Type 2 graph) to the weighted dual graph of $\cD$.  
    \end{lemma}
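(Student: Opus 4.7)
The plan is to classify binomials in $I_\cC$ by weight and show that indispensability in each weight class is captured by a specific subgraph.

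The key initial observation is that every binomial $t^\alpha - t^\beta \in I_\cC$ is $\mu$-homogeneous: since both monomials have the same image under $\pi_\cC$, the extension (\ref{mudef ext}) of $\mu$ forces $\mu(t^\alpha) = \mu(t^\beta)$. Thus indispensable binomials may be analyzed one weight class at a time, by examining the fibers of $\pi_\cC$ over points in $\bbZ_{\geq 0}^n$ of a given $\ell^1$-norm.

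For weight $2$, I would enumerate the monomials in $\bbC[t_{w^{-1}(1)} : w \in \cC]$ of weight $2$: either a single variable $t_v$ with $\mu(v) = 2$, or a product $t_a t_b$ with $\mu(a) = \mu(b) = 1$. Two such monomials share a fiber iff $v = a + b$, so a nontrivial weight-$2$ fiber occurs precisely when $v, a, b \in \cC$ with $\mu(a) = \mu(b) = 1$, $\mu(v) = 2$, and $a + b = v$. In that case the fiber is exactly $\{t_v, t_a t_b\}$, so $t_a t_b - t_v$ is automatically indispensable. The four codewords $0, a, b, v$, together with their Hamming-distance-$1$ edges in the dual graph, constitute exactly a Type 1 subgraph embedded in the weighted dual graph, proving the equivalence for weight $2$.

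For weight $3$ the same template applies, but now monomials of weight $3$ come in three shapes: $t_v$ with $\mu(v) = 3$; $t_a t_b$ with $\mu(a) = 1$ and $\mu(b) = 2$; or $t_{a_1} t_{a_2} t_{a_3}$ with each factor of weight $1$. The Type 2 binomial is indispensable iff the fiber over the corresponding $\prod_\lambda x_\lambda^{v_\lambda}$ contains exactly the two monomials appearing in it, and the Type 2 dual graph encodes precisely this minimality through its $\mu$-labeled vertices and their adjacency pattern. The main obstacle --- and where most of the work lies --- is to rule out an ``exotic'' indispensable weight-$3$ binomial of the form $t_{a_1}t_{a_2}t_{a_3} - t_v$: the plan is to argue that in the depth-$\leq 1$, $1$-inductively pierced setting of this section, the simultaneous presence of $a_1, a_2, a_3$ and $v = a_1+a_2+a_3$ in $\cC$ forces, by well-formedness and the piercing structure, at least one weight-$2$ codeword $a_i + a_j$ to also lie in $\cC$, which populates the fiber with a third monomial and obstructs indispensability. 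Once this geometric step is in place, matching the remaining configuration to a Type 2 embedding is a direct comparison of codewords and dual-graph edges.
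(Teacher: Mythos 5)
Your proof follows essentially the same strategy as the paper's: stratify binomials of $I_\cC$ by weight (using $\mu$-homogeneity), enumerate the possible monomial shapes in each weight class, and decide indispensability by inspecting the corresponding $\pi_\cC$-fibers. Your fiber-theoretic framing is cleaner and more explicit than the paper's very terse argument, and for weight $2$ you correctly observe that the fiber over $x^v$ can contain at most $\{t_v, t_a t_b\}$ (since a weight-two support has a unique decomposition into two weight-one codewords), so the lozenge binomial is automatically indispensable when both monomials exist. That is a tighter justification than what appears in the paper.

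The weight-$3$ case, however, is where a genuine gap remains, and it is precisely the step you flagged yourself. Ruling out the cubic $t_{a_1}t_{a_2}t_{a_3} - t_v$ is stated as a plan, not an argument: you would need to actually prove that in a $1$-inductively pierced, depth-$\le 1$ diagram the simultaneous presence of three weight-one codewords $a_1, a_2, a_3$ and their triple sum $v$ in $\cC$ forces a pairwise sum $a_i + a_j$ into $\cC$ as well (or, even more directly, that such a diagram cannot produce the codeword $v$ at all, since the third curve would have to be a $2$-piercing). Without that geometric step the weight-$3$ classification is unfinished. To be fair, the paper's own proof is at least as elliptical here: it simply classifies cubic monomials as ``higher order terms'' and never addresses them, while spending its effort on showing the alternative quadratic binomial $t_{\{1,2\}}t_{\{3\}} - t_{\{1\}}t_{\{2,3\}}$ (two products of shape weight $1 \times$ weight $2$) is reducible via $t_{\{2\}}$. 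Your fiber framing subsumes that case implicitly, but making it explicit, together with filling in the cubic argument, is what a complete write-up would require.
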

      \begin{proof}
         Monomials of weight 2 arises only in the form $t_{\{1\}}t_{\{2\}}$ and $ t_{\{1,2\}}$. Thus the only binomials of order 2 are of the form $t_{\{1,2\}} - t_{\{1\}}t_{\{2\}}$.
      
         Monomials of weight 3 arise only as $t_{\{1,2,3\}} $ or $t_{\{1,2\}}t_{\{3\}}$ or higher order terms. These diagrams arise only from   a `stacked lozenge' diagram. 
         
         Notice the alternative of a pair of monomials $t_{\{1,2\}}t_{\{3\}}$ and $t_{\{1\}} t_{\{2,3\}}$. The existence of both these monomials implies that $t_{\{2\}}$ exists. Then the binomial  $t_{\{1,2\}}t_{\{3\}}-t_{\{1\}} t_{\{2,3\}}$ is generated by binomials $t_{\{1,2\}} - t_{\{1\}} t_{\{2\} }$ and $t_{\{2,3\}} - t_{\{2\}} t_{\{3\} }$. 
      \end{proof}

    \begin{lemma}\label{Lemma wg 4}
      The toric ideal of a diagram $\cD$ has an indispensable binomial of weight 4 if and only if there is a weighted graph embedding from a Type 3 graph to the weighted dual graph of $\cD$.  
    \end{lemma}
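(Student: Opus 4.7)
The plan is to argue in two directions, paralleling Lemma \ref{Lemma wg 23}.

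For the ``if'' direction, suppose the Type 3 graph embeds into the weighted dual graph of $\cD$ via $\phi$. Since edges in any dual graph connect codewords differing by exactly one bit, the image of $\phi$ must take the form $\{a\}, \{a,b\}, \{a,c\}, \{a,b,c\}$ for some distinct $a,b,c$, so the associated binomial is
\[
f = t_{\{a,b\}} t_{\{a,c\}} - t_{\{a\}} t_{\{a,b,c\}} \in I_\cC.
\]
To show $f$ is indispensable I would invoke the standard criterion that $t^p - t^q$ is indispensable if and only if every other monomial $t^r$ in the same fiber satisfies $\gcd(t^r, t^p) \neq 1$ and $\gcd(t^r, t^q) \neq 1$. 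Enumerating the fiber over $x_a^2 x_b x_c$, the only monomials coprime to $t_{\{a,b\}} t_{\{a,c\}}$ are $t_{\{b,c\}} t_{\{a\}}^2$ and $t_{\{a\}}^2 t_{\{b\}} t_{\{c\}}$, which appear in the fiber precisely when $\{b,c\} \in \cC$ or when both $\{b\}, \{c\} \in \cC$, respectively.

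The key step is therefore the geometric claim that neither configuration is compatible with the Type 3 embedding inside a depth-$\leq 1$, $1$-inductively pierced diagram. I would prove it by a piercing analysis of the three-curve subdiagram on $\lambda_a, \lambda_b, \lambda_c$: the codeword $\{a,b,c\}$ forces $U_b$ and $U_c$ to share a common region inside $U_a$, and the $1$-inductive piercing construction admits only a handful of valid configurations. Tracing through them shows that $U_b$ or $U_c$ is always contained in $U_a$, which excludes simultaneous $\{b\}, \{c\}$ and forces $\{b,c\} = \emptyset$; the alternatives require a $2$-piercing or a depth-$2$ nesting chain, contradicting the hypotheses. Because later curves only subdivide existing regions, no new bare $\{b\}, \{c\}$, or $\{b,c\}$ codewords can arise.

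For the ``only if'' direction, suppose $I_\cC$ contains a weight-$4$ indispensable binomial. I would classify its shape by the degrees of the two monomials and their codeword-weight decompositions, noting that $4$ admits the partitions $4,\ 1{+}3,\ 2{+}2,\ 1{+}1{+}2,\ 1{+}1{+}1{+}1$. For each candidate shape I would verify that it is either equivalent to Type 3 after relabeling, reducible modulo the Type 1 and Type 2 binomials of Lemma \ref{Lemma wg 23} (via an $S$-polynomial argument, hence dispensable), or incompatible with the hypotheses on piercing grounds. The main obstacle is the $(2,2)$-vs-$(2,2)$ case, which I would split by whether the common sum vector has a coordinate equal to $2$ (yielding Type 3) or equals $1$ in every coordinate (producing an ``octahedral'' binomial of the shape $t_{\{1,2\}} t_{\{3,4\}} - t_{\{1,3\}} t_{\{2,4\}}$, which would require a fourth curve to cross at least two existing ones, contradicting $1$-inductive piercedness).
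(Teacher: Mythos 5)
Your ``if'' direction rests on a criterion that is not, in fact, the correct characterization of indispensability, and this creates a real gap. You assert that $t^p - t^q$ is indispensable if and only if every other monomial in the fiber shares a variable with both $t^p$ and $t^q$. That is too weak: a third monomial $t^r$ in the fiber sharing a variable with \emph{both} sides does not preserve indispensability -- it \emph{connects} the fiber graph and shows $t^p - t^q$ is not even a minimal generator. (The correct statement is roughly that $t^p - t^q$ is indispensable exactly when $\{t^p\}$ and $\{t^q\}$ are the only connected components of the fiber graph in that degree, which in practice here forces the fiber to be just $\{t^p, t^q\}$.) This matters in precisely the case your geometric analysis lets through: if $\{b\} \in \cC$ while $\{c\}, \{b,c\} \notin \cC$, then $t_{\{a\}} t_{\{a,c\}} t_{\{b\}}$ is in the fiber of $x_a^2 x_b x_c$, shares $t_{\{a,c\}}$ with one side and $t_{\{a\}}$ with the other, and indeed
\[
t_{\{a,b\}} t_{\{a,c\}} - t_{\{a\}} t_{\{a,b,c\}} = t_{\{a,c\}}\bigl(t_{\{a,b\}} - t_{\{a\}} t_{\{b\}}\bigr) - t_{\{a\}}\bigl(t_{\{a,b,c\}} - t_{\{a,c\}} t_{\{b\}}\bigr),
\]
so the lollipop binomial is dispensable. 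Your piercing argument rules out $\{b,c\}$ and \emph{simultaneous} $\{b\}$ and $\{c\}$, but it must rule out each of $\{b\}$ and $\{c\}$ individually, i.e.\ it must force $U_b \cup U_c \subset U_a$. The paper's own proof is explicit on exactly this point: it notes that if $\{2\} \in \cC$ then $p = t_{\{1,2\}}t_{\{1,3\}} - t_{\{1\}}t_{\{1,2,3\}}$ is generated by $t_{\{1,2\}} - t_{\{1\}}t_{\{2\}}$ and $t_{\{1,2,3\}} - t_{\{1,3\}}t_{\{2\}}$, and that indispensability requires neither $\{2\}$ nor $\{3\}$ to be a codeword. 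Your ``only if'' sketch -- classifying weight-$4$ monomials by their $2+2$ and $3+1$ decompositions and showing each non-Type-3 shape is either reducible modulo Lemma~\ref{Lemma wg 23} or geometrically prohibited -- does track the paper's case analysis, but it inherits the same defect: the reducibility check has to account for bridging monomials that are \emph{not} coprime to either side, not just the coprime ones.
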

    \begin{proof}
      Monomials of order 4 arise from terms $t_{\{i,j\}}t_{\{k,l\}}$ and  $t_{\{i,j,k\}} t_{\{l\}}$.
      
      Let us begin with monomials of the form $t_{\{i,j\}}t_{\{k,l\}}$.
      We rule out the weight 2 + 2  term with $i = k,\ j =l$, ie $t_{\{1,2\}}t_{\{1,2\}}$,  as it has no linear or quadratic balancing monomials. Now consider  terms of the form $t_{\{1,2\}}t_{\{1,3\}}$. $\{1,2\},\{1,3\} \in \cC $  imply $\{1\} \in \cC$.  The only possible nontrivial balancing monomial is $t_{\{1,2,3\}} t_{\{1\}} $, which requires zone $\{1,2,3\} $ exists. In this case we have $p=t_{\{1,2\}}t_{\{1,3\}} -t_{\{1,2,3\}} t_{\{1\}}  $ in the kernel. If in addition $\{2\} \in \cC$, then  $p$ is generated by binomials $t_{\{1,2\}} - t_{\{1\}} t_{\{2\}} $ and $t_{\{1,2,3\}} - t_{\{1,3\}}t_{\{2\}}$. If neither $\{2\}$ or $\{3\}$ exists then $U_2\cup U_3 \subset U_1$, so the binomial arises from the lollipop diagram. 
      Finally consider monomial $t_{\{1,2\}} t_{\{3,4\}} $ and note that the binomial  $t_{\{1,2\}} t_{\{3,4\}} - t_{\{2,3\}} t_{\{1,4\}} $  is  not permitted in a one piercing diagram. 
      If $\{1,2,3\},\{4\} \in \cC$ then we have binomial  $ p = t_{\{1,2\}} t_{\{3,4\}} - t_{\{1,2,3\}} t_{\{4\}} $ but then $\{3\}\in \cC$ so $ p $ is generated by $t_{\{3,4\}} - t_{\{3\}}t_{\{4\}}$ and $t_{\{1,2,3\}} - t_{\{1,2\}} t_{\{3\}}$.
      This concludes all possible binomials containing a weight 2 + 2 term.
      
      Now we consider terms of the form $t_{\{i,j,k\}} t_{\{l\}}$. We only need to consider binomials with balancing terms which are weight 3 + 1. The possibilities are $t_{\{1,2,3\}} t_{\{4\}}$ or $t_{\{1,2,3\}} t_{\{1\}}$. There are no alternative possible balancing weight 4 = 3+1 monomials  for the second type. For the first type the only balancing term is $t_{\{2,3,4\}} t_{\{1\}}$, clearly this does not exist if $U_1 \subset U_2 \cup U_3 $. We may assume that $U_3 \subset U_1$, and $U_3 \cap U_2 \neq \emptyset$. But zones if zones $\{1\}$, $\{1,2,3\}$, $\{2,3,4\}$, and $\{4 \} $ exist this requires adding a lozenge to the stick-and-lozenge diagram with boundary incident on two sides.
     \end{proof}

    \begin{lemma}\label{Lemma wg 5}
      The toric ideal of a diagram $\cD$ has an indispensable binomial of weight 5 if and only if there is a weighted graph embedding from a Type 4,5,  or 6 graphs to the weighted dual graph of $\cD$.  
    \end{lemma}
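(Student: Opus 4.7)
The plan is to mirror the case analysis of Lemma \ref{Lemma wg 4}: enumerate the possible shapes of a weight-$5$ monomial coming from a depth-$\leq 1$, $1$-inductively pierced diagram, pair monomials into binomials $t^{a}-t^{b}\in I_\cC$, and for each pairing either produce a reduction modulo the Type 1, 2, or 3 binomials (so the binomial is not indispensable) or identify it with one of the Type 4, 5, 6 binomials from Table \ref{table dep1}.

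First I would list the partitions of $5$ into codeword weights, giving possible monomial shapes $5$, $4+1$, $3+2$, $3+1+1$, $2+2+1$, $2+1+1+1$, and $1+1+1+1+1$, and for each shape identify the monomials that could balance it to yield a binomial in $I_\cC$. Whenever the two monomials share a common factor, cancellation produces a binomial of weight less than $5$, already handled by Lemmas \ref{Lemma wg 23} and \ref{Lemma wg 4}. Whenever a factor $t_{\{i,j\}}$ appears with $\{i\},\{j\}\in\cC$, Type 1 substitution reduces the weight; when $t_{\{i,j,k\}}$ appears with $\{i\},\{j,k\}\in\cC$, Type 2 substitution does the same; and when $t_{\{i,j,k\}}$ appears with $\{i,j\},\{i,k\},\{i\}\in\cC$, Type 3 substitution applies. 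A careful enumeration shows that the shapes $5$, $4+1$, $3+1+1$, $2+2+1$, $2+1+1+1$, and $1+1+1+1+1$, as well as $3+2$ paired against any shape other than $3+2$, all fall under one of these reductions.

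This leaves only binomials of the form $t_A t_B - t_{A'} t_{B'}$ with $|A|=|A'|=3$, $|B|=|B'|=2$, and no common factor between the two sides. The multiset identity $A+B=A'+B'$ in $\bbZ^n$ then forces, up to relabeling, $A=\{1,2,3\}$, $B=\{1,4\}$, $A'=\{1,3,4\}$, $B'=\{1,2\}$, yielding the binomial $t_{\{1,2,3\}}t_{\{1,4\}} - t_{\{1,2\}}t_{\{1,3,4\}}$. For this binomial to be indispensable, none of the Type 1, 2, 3 reductions may activate: in particular certain combinations of $\{1\}$, $\{2,3\}$, $\{2,4\}$, $\{3,4\}$, or of $\{1,2\}$ and $\{1,3\}$ together with $\{1\}$ and $\{1,2,3\}$, must be absent from $\cC$. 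Translating these prohibitions into constraints on the Euler diagram, under the standing depth-$\leq 1$ and $1$-inductive piercing hypotheses, leaves exactly the three configurations depicted as Types 4, 5, and 6 in Table \ref{table dep1}; conversely, on each of those diagrams a direct check shows no such reduction is available, so the binomial is indispensable.

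The main obstacle will be the final step: enumerating, within the class of depth-$\leq 1$, $1$-inductively pierced diagrams, the Euler configurations that realize the four codewords $\{1,2,3\}$, $\{1,4\}$, $\{1,2\}$, $\{1,3,4\}$ while avoiding every other codeword that would trigger a Type 1, 2, or 3 reduction. The algebraic reductions themselves are a routine extension of the arguments in Lemma \ref{Lemma wg 4}; the geometric catalogue of surviving diagrams, and the verification that it consists of exactly the three diagrams in Types 4, 5, and 6, is where the real content of the lemma lies.
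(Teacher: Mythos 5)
Your overall strategy — enumerate weight-$5$ monomial shapes, reduce common-factor and lower-weight-factorable pairs modulo Types 1, 2, 3, and then classify the surviving $3+2$ vs.\ $3+2$ binomials geometrically — matches the paper's proof in spirit. However, there is a concrete combinatorial error in the middle step that breaks the argument as written.

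You claim that the identity $A+B=A'+B'$ with $|A|=|A'|=3$, $|B|=|B'|=2$ and no common factor ``forces, up to relabeling,'' the single form $t_{\{1,2,3\}}t_{\{1,4\}} - t_{\{1,2\}}t_{\{1,3,4\}}$. This is false without additional geometric input. Purely combinatorially, one must at minimum distinguish $|A\cap B|=1$ from $|A\cap B|=0$. The second possibility gives binomials such as $t_{\{1,2,3\}}t_{\{4,5\}} - t_{\{2,4,5\}}t_{\{1,3\}}$ (or $- t_{\{1,4,5\}}t_{\{2,3\}}$), which are genuinely different from your form and which the paper handles in a separate sub-case, showing they reduce via $t_{\{1,2,3\}}-t_{\{1,3\}}t_{\{2\}}$ and $t_{\{2,4,5\}}-t_{\{4,5\}}t_{\{2\}}$ or by nonexistence of the required zones. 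Even within the $|A\cap B|=1$ case, the shared curve may fail to be the outer field of the piercing ($t_{\{1,2,3\}}t_{\{2,4\}}$ rather than $t_{\{1,2,3\}}t_{\{1,4\}}$), and the paper treats this as its own sub-case, ruling it out because the zones $\{2,3\}$ and $\{2,3,4\}$ cannot exist. Your proposal collapses all of this into one line and therefore misses exactly the sub-cases whose elimination is the content of the lemma. A secondary remark: the shapes $5$ and $4+1$ in your list presuppose codewords of weight $\geq 4$, which depth-$\leq 1$ diagrams never produce; this should be stated rather than left implicit.

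Finally, the step you flag as ``the main obstacle'' — cataloguing which depth-$\leq 1$, $1$-inductively pierced diagrams realize the surviving codewords while avoiding every reduction trigger, and matching the result to Types 4, 5, 6 — is precisely the part of the proof you have not supplied. Saying that a careful enumeration ``leaves exactly the three configurations'' is the conclusion, not an argument for it. As it stands, the proposal is an accurate plan with a genuine gap in the case analysis and an unexecuted geometric classification.
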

    \begin{proof}
       Monomials of order 5 arise only as weight 3+2 terms: $t_{\{i,j,k\}}t_{\{l,m\}}$. We will write these as $ t_{\{1,2,3\}}t_{\{l,m\}}$ with $ U_3 \subset U_1$.
       
       Note that, if $U_3 \subset U_1$, $ t_{\{1,2,3\}}t_{\{2,3\}}$ does not exist, and $t_{\{1,2,3\}}t_{\{1,m\}}$ for $m = 2,3$ has no balancing term. 
       
       Let us consider $ t_{\{1,2,3\}}t_{\{1,4\}}$. If $U_4 \cap U_2 \neq \emptyset  $ then $U_4 \subset U_1 $, the only permitted balancing term is  $ t_{\{1,2,4\}}t_{\{1,3\}}$ this arises only as 2 one-piercings within $U_1$. Now let us consider $ t_{\{1,2,3\}}t_{\{2,4\}}$ the only possible balancing monomials are $t_{\{1,2\}}t_{\{2,3,4\}}$ or $t_{\{2,3\}}t_{\{1,2,4\}}$ however neither zones $\{2,3\}$ nor $\{2,3,4\}$ are permitted.
       
       Finally let us consider $a = t_{\{1,2,3\}}t_{\{4,5\}}$, but $(U_4\cup U_5 )\cap U_i$ can only be non-empty for one of $i = 1,2$. If $(U_4\cup U_5 )\cap U_1$ is nonempty,  we have zones $\{1,4,5\}$ or $\{1,4\}$, however the zones $\{2,3\}$ and $\{2,3,5\}$ do not exist so these do not correspond to balancing monomials for $a$. On the other hand if  $(U_4\cup U_5 )\cap U_2$ is non-empty and $U_5 $ is contained in $U_4$ and is a piercing of $U_2$ then we have the binomial $  t_{\{1,2,3\}}t_{\{4,5\}} - t_{\{2,4,5\}}t_{\{1,3\}} $, but this is binomial is generated by the pair $t_{\{1,2,3\}} -t_{\{1,3\}}t_{\{2 \}}$ and  $t_{\{2,4,5\}} -t_{\{4,5\}}t_{\{2 \}}.$
    \end{proof}

    \begin{lemma}\label{Lemma wg 6}
      The toric ideal of any Euler diagram $\cD$ has no indispensable binomials of order 6 or higher. 
    \end{lemma}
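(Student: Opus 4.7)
Proof plan. The argument parallels the case analyses in Lemmas \ref{Lemma wg 4}--\ref{Lemma wg 5}: I would enumerate all possible shapes (by $\mu$-weight partition) of a putative weight-$6$ monomial, and show that every balancing decomposition either is disallowed by the depth or $1$-piercing constraints, or arises from a binomial of weight at most $5$ via multiplication by a common factor.

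I would begin by showing that in a $1$-inductively pierced diagram of depth $\leq 1$, every codeword has $\mu$-weight at most $3$. This follows by induction on the $1$-inductive construction: each newly added curve is a $0$- or $1$-piercing, so it crosses at most one prior boundary, and the depth bound prevents nesting inside a chain of two or more fields; together these force every zone to meet at most three curves. Accordingly each variable $t_{w^{-1}(1)}$ has weight in $\{1,2,3\}$, and the only partitions of $6$ into such parts are $3{+}3$, $3{+}2{+}1$, $3{+}1{+}1{+}1$, $2{+}2{+}2$, $2{+}2{+}1{+}1$, $2{+}1{+}1{+}1{+}1$, and $1{+}1{+}1{+}1{+}1{+}1$.

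For any shape with at least three factors in $m_1$, I would pick a pair of factors $t_a t_b$ whose product has weight at most $5$. Any binomial in $I_\cC$ having $t_a t_b$ as one of its terms has weight $\leq 5$ and, by Lemmas \ref{Lemma wg 23}--\ref{Lemma wg 5}, is generated by the Type 1--6 binomials already exhibited. Substituting via that relation rewrites $m_1$ modulo weight-$\leq 5$ generators; since there are only finitely many weight-$6$ monomials and each substitution strictly changes the multiset of codeword-factors in a controllable way, repeated substitution terminates and reduces $p = m_1 - m_2$ to a combination of lower-weight generators, contradicting indispensability.

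The principal obstacle is the $3{+}3$ case $m_1 = t_A t_B$ with $|A|=|B|=3$, where no sub-product of two factors has weight $\leq 5$. Each weight-$3$ codeword has the form $\{i,j,k\}$ with $U_k \subset U_i$ and $U_j$ piercing $U_i$ through $U_k$. I would enumerate all balancing monomials $m_2$ with the same indicator-vector sum $A+B$: in every subcase either a required codeword in $m_2$ violates the depth-$\leq 1$ or $1$-piercing hypotheses and thus is not in $\cC$, or $m_2$ has at least three factors and the previous paragraph applies. Hence no weight-$6$ binomial in $I_\cC$ is indispensable, and the same argument extends to any weight $\geq 6$ since every such monomial then has $\geq 3$ factors and is handled by the sub-product reduction.
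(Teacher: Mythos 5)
Your overall strategy—bound codeword weights, enumerate weight partitions, and then do a case analysis on the hard case—is in the same spirit as the paper's, but there are two issues worth flagging.

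First, the paper's proof addresses \emph{only} the $3{+}3$ case: it opens with ``Weight 6 monomials arise only as weight $3{+}3$ terms'' and never revisits the other partitions. The implicit justification is not a rewriting/termination argument of the kind you sketch, but rather the structural fact (Theorem~\ref{thm: GOY}, part~3) that for $1$-inductively pierced codes the toric ideal is generated by quadratics, so any indispensable binomial—which must appear in every binomial generating set, in particular in a quadratic one—has degree at most $2$, hence each term has at most two variable factors. Combined with the weight-$\leq 3$ bound on codewords (which you state and the paper leaves implicit), this forces $3{+}3$ for weight $6$ and immediately rules out all weights $\geq 7$. Your ``sub-product reduction'' is a genuine gap in its current form: you never establish that a balancing monomial for $t_a t_b$ exists (if $t_a t_b$ is alone in its $\pi_\cC$-fiber, no substitution is available), and Lemmas~\ref{Lemma wg 23}--\ref{Lemma wg 5} only classify the \emph{indispensable} binomials of weight $\leq 5$, not all such binomials, so they cannot be cited to conclude that an arbitrary weight-$\leq 5$ relation is generated by Types 1--6. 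The degree-two observation is both cleaner and airtight.

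Second, the substance of the paper's proof is precisely the $3{+}3$ enumeration, split by the number of curves the two weight-$3$ codewords share ($2$, $1$, or $0$): in each subcase the only candidate balancing monomial requires a zone (such as $\{1,2,3,4\}$ or $\{1,3,5\}$) that is incompatible with the depth-$\leq 1$, $1$-piercing hypotheses. Your proposal reduces this to ``I would enumerate all balancing monomials $m_2$,'' which is a plan rather than a proof; the actual case split and the geometric reasons for exclusion are the content you would still need to supply.
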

    \begin{proof}
       Weight 6 monomials arise only as weight  3+3 terms. 
       If the zones have 2 curves in common $\{1,2,3\}$ and  $\{1,2,4\}$, then a balancing monomial would contain  zone-variable corresponding to zone $\{1,2,3,4\}$ which does not exist.  
       Suppose the zones have 1 curve in common $\{1,2,3\}$ and $\{1,4,5\}$, then $U_1 $ cannot be contained in $U_2\cup U_3$. If zone $\{1,3,5\}$ exists then $U_3\cup U_5 \subset U_1$ but then the zone $\{1,4,5\}$ only exists if $U_4\subset U_1$ so the balancing term is $t_{\{1,3,5\}}t_{\{1,2,4\}}$   Again, if the zone  $\{1,2,4\}$ exists the zone $\{1,3,5\}$ does not exist as loops are prohibited in the piercing graph.
       Finally, suppose the two zones have no curves in common $\{1,2,3\}$ and $\{4,5,6\}$. We must have $U_3 \subset U_1$ and $U_6 \subset U_4$. But if the zone $\{2,4,6 \}$ exists, the zone $\{1,3,5\} $ is prohibited as this would violate the piercing construction.
    \end{proof}

    \section{External Diagrams}
    
    \begin{defn} Let $\cD$ be a well-formed Euler diagram on curves $ \{ \lambda_1, \ldots, \lambda_n \} $ with corresponding interiors $ \{ U_1, \ldots, U_n\} $. 
    If 
    \[
        U_i \setminus \bigcup_{j \neq i} U_j \neq \emptyset
    \]
    for each $i \in [n]$, then $D$ is called an \emph{external} Euler diagram.
    If a code has an external Euler diagram as a realization, then we call the code \emph{external} as well.
    \end{defn}

    \begin{figure}[h]
    \center
    \includegraphics[width=4in]{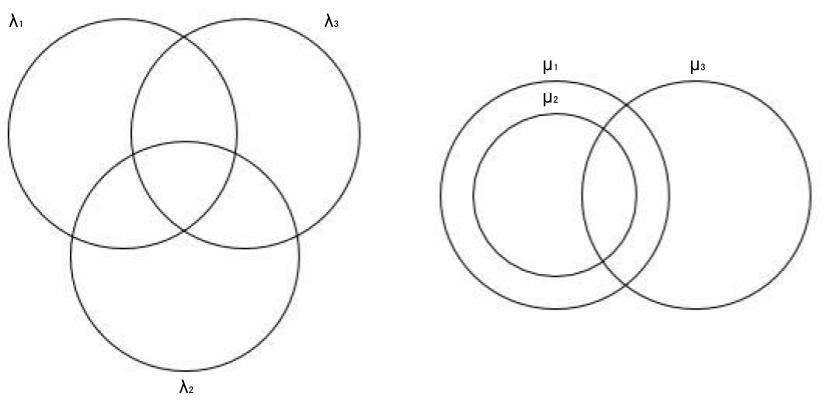}
    \caption{An external Euler diagram (left) and an Euler diagram that is not external (right).}\label{fig:externals}
    \end{figure}
    
    In Figure~\ref{fig:externals}, the diagram on the left is external, as none of the $\lambda_i$ are completely contained in the interior of the others. However, the diagram on the right is not external, as $\mu_2$ is contained within the interior of $\mu_1$. 

    We point out here that external diagrams on $n$ neurons will always induce a code containing the $n$ codewords where all but one entry is zero.
    
    \begin{defn}
        Let $\cC$ be an external code and $c \in \cC$.
        The \emph{support} of $c = c_1\dots c_n$, which we denote $\supp(c)$, is the set
        \[
            \supp(c) = \{ i \mid c_i \neq 0\}.
        \]
        The \emph{weight} of $c$, which we denote $\wt(c)$, is $\wt(c) = |\supp(c)|$.
    \end{defn}
    
    It is easy to see then that, for an external Euler diagram, any codeword can be written as the sum of codewords with weight one. This gives us a set of nice binomials that we know must be in the toric ideal of any external diagram.
    
    \begin{example}\label{ex: example GB}
        Consider Figure~\ref{fig:externals}. The corresponding code of this diagram is 
        \[
            \cC_1 = \{000, 100 , 010, 001, 110, 101, 011, 111 \}.
        \]
    Two reduced Gr\"obner bases of the toric ideal $I_{\cC_1}$ are
    \[
    \begin{aligned}
        G_1 = & \{ t_{\{1,3\}} t_{\{2,3\}} - t_{\{3\}}t_{\{1,2,3\}},
        t_{\{1,2\}} t_{\{2,3\}} - t_{\{2\}} t_{\{1,2,3\}}, 
        t_{\{1\}} t_{\{2,3\}} - t_{\{1,2,3\}}, 
        t_{\{1,2\}} t_{\{1,3\}} - t_{\{1\}} t_{\{1,2,3\}}, \\
        & t_{\{2\}} t_{\{1,3\}} - t_{\{1,2,3\}},
        t_{\{3\}} t_{\{1,2\}} - t_{\{1,2,3\}}, 
        t_{\{2\}} t_{\{3\}} -  t_{\{2,3\}}, 
        t_{\{1\}} t_{\{3\}} - t_{\{1,3\}}, 
        t_{\{1\}} t_{\{2\}} - t_{\{1,2\}}\},
    \end{aligned}
    \]
    for which the grevlex ordering is used, and
    \[
        G_2 = \{t_{\{2,3\}}  - t_{\{2\}} t_{\{3\}} , t_{\{1,3\}}  - t_{\{1\}} t_{\{3\}} , t_{\{1,2\}}  - t_{\{1\}} t_{\{2\}} , t_{\{1,2,3\}}  - t_{\{1\}} t_{\{2\}} t_{\{3\}}\},
    \]
    for which the weight vector $(0,0,0,1,1,1,2)$, and ties decided by the grevlex ordering, is used.
    The only binomials that are in both $G_1$ and $G_2$ are $t_{\{1,2\}} - t_{\{1\}} t_{\{2\}}, t_{\{1,3\}} - t_{\{1\}} t_{
    \{3\}}$, and $t_{\{2,3\}} - t_{\{2\}} t_{\{3\}}$. 
    It turns out that this is not a coincidence, as they are indispensable binomials. 
    \end{example}
    
    Given a code $\cC$, let 
    \[
        \calA_\cC = \{t_{w^{-1}(c_i)} t_{w^{-1}(c_j)} - t_{w^{-1}(c_k)} \mid c_i,c_j,c_k \in \cC, \, c_i + c_j = c_k, \, \wt(c_i)=\wt(c_j)=1\}.
    \]
    
    \begin{prop}\label{prop: A indispensable}
        If $\cC$ is an external code, then the binomials in $\calA$ are indispensable.
    \end{prop}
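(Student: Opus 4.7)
\medskip

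\noindent\textbf{Proof proposal.} The plan is to establish indispensability by a direct fiber computation: for each $p = t_{\{i\}}t_{\{j\}} - t_{\{i,j\}} \in \calA_\cC$, I will show that the fiber $\pi_\cC^{-1}(x_i x_j)$ consists of exactly the two monomials occurring in $p$, and that their exponent vectors have disjoint support in $\CC[t_{w^{-1}(1)} : w \in \cC]$. These two facts together are a standard sufficient condition for indispensability.

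First I would compute the fiber. A monomial $M = \prod_{w \in \cC} t_{w^{-1}(1)}^{a_w}$ satisfies $\pi_\cC(M) = x_i x_j$ precisely when $\sum_{w \in \cC} a_w\, w = e_i + e_j$ in $\NN^n$. Examining the coordinate at any $k \notin \{i,j\}$ forces $a_w = 0$ for every codeword $w$ whose support meets $[n] \setminus \{i,j\}$; hence only codewords supported in $\{i,j\}$ can contribute. Since $\cC$ is external, $\{i\},\{j\} \in \cC$, and membership of $p$ in $\calA_\cC$ gives $\{i,j\} \in \cC$, so the only variables appearing with nonzero exponent are $t_{\{i\}}, t_{\{j\}}, t_{\{i,j\}}$. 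Comparing the $i$- and $j$-coordinates of the vector equation then yields $a_{\{i\}} + a_{\{i,j\}} = 1$ and $a_{\{j\}} + a_{\{i,j\}} = 1$, whose only nonnegative integer solutions are $(1,1,0)$ and $(0,0,1)$, corresponding to the two monomials $t_{\{i\}}t_{\{j\}}$ and $t_{\{i,j\}}$.

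The indispensability conclusion then follows from the standard Markov-basis principle: if $\cB$ is any binomial generating set of $I_\cC$, then for each fiber of $\pi_\cC$ the graph whose edges are translates of binomials in $\cB$ must connect all monomials in that fiber. For our two-element fiber, this requires some $t^a - t^b \in \cB$ and some monomial $t^c$ with $t^c(t^a - t^b) = \pm p$; equivalently, the exponent vectors of $t_{\{i\}}t_{\{j\}}$ and $t_{\{i,j\}}$ must equal $c + a$ and $c + b$ respectively. Because those two exponent vectors have disjoint supports, their componentwise minimum is $0$, so $c = 0$ and $\pm p \in \cB$.

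The main point to get right is this last step: one must either cite the fiber-characterization of indispensable binomials directly or spell out the connectivity argument sketched above. The disjoint-support observation is what makes the endgame trivial, and is really the reason externality is the right hypothesis, so no serious technical obstacle is anticipated beyond writing the argument carefully.
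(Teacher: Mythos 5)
Your proof is correct, and it takes a genuinely different and more careful route than the paper's. The paper argues by degrees: since one term of $b$ is linear and $I_\cC$ has no degree-$0$ elements, it asserts that any polynomial combination of binomials in $G\setminus\{\pm b\}$ has all terms of degree $\geq 2$, so $b$ cannot be omitted. As written, that step is too quick: binomial generating sets of $I_\cC$ do routinely contain binomials with a linear term (for instance $t_{\{1,2,3\}}-t_{\{1\}}t_{\{2\}}t_{\{3\}}$ in $G_2$ of Example~\ref{ex: example GB}), so one must argue separately that no such binomial with the specific linear term $t_{\{i,j\}}$ exists other than $\pm b$ itself. That is exactly what your fiber computation supplies: since $\pi_\cC^{-1}(x_ix_j)=\{t_{\{i\}}t_{\{j\}},\,t_{\{i,j\}}\}$, any binomial in $I_\cC$ having $t_{\{i,j\}}$ as a term must be $\pm b$. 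Pairing this with the disjoint-support observation and the Markov-basis (fiber-connectivity) criterion makes the endgame airtight. So your version not only reaches the same conclusion but also closes the gap the paper leaves implicit, and it isolates the feature that really drives indispensability here — a two-element fiber of coprime monomials — in a way that generalizes beyond the binomials of $\calA_\cC$. One small note: you use externality to conclude $\{i\},\{j\}\in\cC$, but this already follows from $p\in\calA_\cC$ (the definition requires $c_i,c_j,c_k\in\cC$); externality is not actually needed for this particular proposition.
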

    
    \begin{proof}
        Let $b \in \calA_\cC$. 
        If $b$ does appear in a binomial generating set $G$ of $I_\cC$, then $b$ is a polynomial combination of other binomials in $G$.
        However, one of the monomials of $b$ has degree $1$, and the toric ideal has no elements of degree $0$.
        So, any product of binomials in $G$ must result in a     polynomial whose terms all are of degree $2$ or higher.
        Therefore, $b$ must be indispensable.
    \end{proof}
    
    As a result, we know that for any external code, the binomials in $\calA_\cC$ will, up to sign, appear in every reduced Gr\"obner basis of $I_\cC$. For the toric ideal of the code $\cC_1$ from Example~\ref{???}, it is unsurprising that $\calA$ is exactly the set of indispensable binomials. However, it is not obvious that these two sets of binomials coincide for all external codes.

    Let $\omega$ be the weight vector on $\CC[t_{w^{-1}(c_1)} , \ldots , t_{w^{-1}(c_n)}]$ satisfying $\omega_i = \wt(c_i) - 1$. Define $\prec_\omega$ to be the monomial ordering on $\CC [ t_{w^{-1}(c_1)} , \ldots , t_{w^{-1}(c_n)} ]$ where $t^{\alpha} \prec_\omega t^{\beta}$ if $\omega \cdot \alpha < \omega \cdot \beta$, with ties being determined by the grevlex order.
    Recall as well that $c_0 = 0\dots0$.
    
    \begin{lem}\label{lem: lead terms}
        If $c_i \in \cC$ and $c_i = \sum_{c \in \cC'} c$ for some subset $\cC'$ of $\cC \setminus \{c_0,c_i\}$, then $\prod_{c \in \cC'} t_{w^{-1}(c)} \prec_\omega t_{w^{-1}(c_i)}$. 
    \end{lem}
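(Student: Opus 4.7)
The plan is to directly compute the $\omega$-weights of the two monomials in question and verify that the left-hand side is strictly smaller, so that the grevlex tie-breaker never comes into play.

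First I would observe that because $c_i \in \{0,1\}^n$ and each $c \in \cC'$ is a $0/1$-vector, the equation $c_i = \sum_{c \in \cC'} c$ holding in $\ZZ^n$ forces the supports $\{\supp(c) : c \in \cC'\}$ to be pairwise disjoint with union equal to $\supp(c_i)$. In particular,
\[
\sum_{c \in \cC'} \wt(c) \;=\; \wt(c_i).
\]
Using the definition $\omega_j = \wt(c_j) - 1$, the $\omega$-weight of the monomial on the left is
\[
\sum_{c \in \cC'} \bigl(\wt(c) - 1\bigr) \;=\; \wt(c_i) - |\cC'|,
\]
while the $\omega$-weight of $t_{w^{-1}(c_i)}$ is simply $\wt(c_i) - 1$.

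It therefore remains to show $|\cC'| \geq 2$. Since $c_i \neq c_0$, the case $|\cC'| = 0$ is ruled out because it would force $\sum_{c\in\cC'} c = 0 = c_0 = c_i$. The case $|\cC'| = 1$ would give $\cC' = \{c\}$ with $c = c_i$, contradicting $\cC' \subseteq \cC \setminus \{c_0, c_i\}$. Hence $|\cC'| \geq 2$, which yields
\[
\wt(c_i) - |\cC'| \;\leq\; \wt(c_i) - 2 \;<\; \wt(c_i) - 1,
\]
so the $\omega$-values are strictly separated and the conclusion $\prod_{c \in \cC'} t_{w^{-1}(c)} \prec_\omega t_{w^{-1}(c_i)}$ follows without invoking the grevlex tie-breaker.

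There is essentially no obstacle here; the argument is a direct bookkeeping check. The only subtle point worth flagging in the write-up is the support-disjointness step, since it is what converts the vector equation into the additive identity on weights, and the quick verification that $|\cC'| \geq 2$, which is exactly what makes the weight ordering (with the $-1$ shift) strictly decrease.
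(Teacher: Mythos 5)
Your proof is correct and takes essentially the same approach as the paper's: compute the $\omega$-weight of each monomial and observe that the product's weight is $\wt(c_i) - |\cC'|$, which is strictly less than $\wt(c_i) - 1$. You are slightly more careful than the paper in two respects: you make explicit that the supports of the $c \in \cC'$ are pairwise disjoint (which is what justifies $\sum \wt(c) = \wt(c_i)$), and you justify $|\cC'| \geq 2$ from the hypothesis $\cC' \subseteq \cC \setminus \{c_0, c_i\}$, whereas the paper simply posits $|\cC'| = m \geq 2$ without argument.
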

    
    \begin{proof}
        Let $c_i \in \cC$ with $\wt(c_i) = k$. Assume that $c_i = \sum_{c \in \cC'} c$ for some subset $\cC'$ of $\cC$ with no $c = c_i$ and $|\cC'| = m \geq 2$. Then $\sum_{c \in \cC'} \wt(c) = k$ and we see that $\omega \cdot t_{w^{-1}(c_i)} = k-1$. Moreover, 
        \[
            \omega \cdot \prod_{c \in \cC'} t_{w^{-1}(c)} = \sum_{c \in \cC'} \omega(c) \leq k - |\cC'| = k-m < k =  \omega \cdot t_{w^{-1}(c_i)},
        \]
        as desired.
    \end{proof}
    
    Since we are concerned with only external diagrams, we know that any $c_i \in \cC$ with $\wt(c_i) \geq 2$ is the sum of some set of $c_j \in \cC$ with $\wt(c_j) = 1$. This implies that we have polynomials in our toric ideal with one linear term $t_{w^{-1}(c_i)}$ and one term with degree $\wt(c_i)$.
    
    \begin{lem}\label{lem: create B}
        Let $\cC$ be an external code.
        The reduced Gr\"obner basis of $I_{\cC}$ with respect to $\prec_\omega$ is exactly the set 
        \[
            B = \left\{ t_{w^{-1}(c)} - \prod_{j \in \supp(c)} t_{w^{-1}(e_j)} \mid \wt(c) = k \geq 2\right\},
        \]
        where $e_j$ is the $j^{th}$ standard basis vector.
    \end{lem}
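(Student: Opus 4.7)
The plan is to verify three things in sequence: that $B \subseteq I_\cC$ with the expected leading terms, that every element of $I_\cC$ reduces to zero modulo $B$ (so $B$ is a Gr\"obner basis), and finally that $B$ is reduced.

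First, for each $c \in \cC$ with $\wt(c) = k \geq 2$, one checks that
\[
\pi_\cC\Bigl(\textstyle\prod_{j \in \supp(c)} t_{w^{-1}(e_j)}\Bigr) = \prod_{j \in \supp(c)} x_j = x^c = \pi_\cC(t_{w^{-1}(c)}),
\]
so each binomial of $B$ lies in $I_\cC$. Here externality is used, as it guarantees that each $e_j$ actually belongs to $\cC$, so the variables $t_{w^{-1}(e_j)}$ exist. Applying Lemma \ref{lem: lead terms} with $\cC' = \{e_j : j \in \supp(c)\}$ then identifies the $\prec_\omega$-initial term of each such binomial as $t_{w^{-1}(c)}$.

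Next, I would prove that $B$ is a Gr\"obner basis via a normal-form argument. Let $S \subseteq \CC[t_{w^{-1}(c)} : c \in \cC]$ denote the polynomial subring generated by the $n$ weight-one variables $t_{w^{-1}(e_1)}, \dots, t_{w^{-1}(e_n)}$. Any monomial involving a variable $t_{w^{-1}(c)}$ with $\wt(c) \geq 2$ is divisible by the leading term of some element of $B$, so by iterated division every polynomial reduces modulo $B$ to a remainder lying in $S$. The restriction $\pi_\cC|_S$ is an isomorphism onto $\CC[x_1, \dots, x_n]$, since it sends the algebraically independent generators $t_{w^{-1}(e_j)}$ bijectively to the polynomial variables $x_j$. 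Hence if $f \in I_\cC = \ker \pi_\cC$ reduces modulo $B$ to a remainder $r \in S$, the equation $\pi_\cC(r) = \pi_\cC(f) = 0$ forces $r = 0$. Thus every element of $I_\cC$ reduces to zero modulo $B$, which is a standard characterization of a Gr\"obner basis.

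Finally, I would check reducedness. Each element of $B$ has leading coefficient $1$ by inspection, and the non-leading term of each element of $B$ is a product of weight-one variables, none of which equals $t_{w^{-1}(c)}$ for any $c$ with $\wt(c) \geq 2$. Thus no leading term of an element of $B$ divides any term of another element of $B$. The step requiring the most care is the middle one, where one must justify that division by $B$ terminates with a remainder in $S$; this is routine because each reduction step strictly decreases the number of occurrences of variables of positive $\omega$-weight, and the essential content is the injectivity of $\pi_\cC|_S$, which is exactly where the external hypothesis on $\cC$ does its work.
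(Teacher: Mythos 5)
Your proof is correct, and it takes a genuinely different route from the paper's. The paper argues ``from the inside out'': it starts from the unknown reduced Gr\"obner basis $\cG$, shows that for every $c$ with $\wt(c)\geq 2$ there must be an element of $\cG$ with initial term exactly $t_{w^{-1}(c)}$, uses the reducedness condition and the binomial structure of toric Gr\"obner bases to pin down the tail of each such element as $\prod_{j\in\supp(c)}t_{w^{-1}(e_j)}$, and then argues that no further elements can appear. Your approach instead verifies the Gr\"obner property directly by the standard ``every element of the ideal reduces to zero'' criterion: you observe that the leading terms of $B$ are precisely the variables $t_{w^{-1}(c)}$ with $\wt(c)\geq 2$, so division by $B$ always terminates with a remainder in the subalgebra $S$ generated by the weight-one variables, and the injectivity of $\pi_\cC|_S$ then forces the remainder to vanish for any $f\in I_\cC$; reducedness is checked separately by inspection. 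Your route has a real advantage: it does not need to invoke uniqueness of the reduced Gr\"obner basis, the fact that reduced Gr\"obner bases of toric ideals consist of binomials, or a case analysis on the possible forms of a binomial's non-initial term, and it handles cleanly (via injectivity of $\pi_\cC|_S$) the possibility of an extra basis element whose leading term is a product of weight-one variables only, a case the paper's final sentence passes over implicitly. The paper's route, on the other hand, stays closer to the proof of Lemma~\ref{lem: elements of B in A} that follows it, which also reasons directly about what a reduced Gr\"obner basis may contain. One small remark: the place where externality is used is not so much the injectivity of $\pi_\cC|_S$ per se as the fact that $S$ is generated by \emph{all} $n$ variables $t_{w^{-1}(e_1)},\dots,t_{w^{-1}(e_n)}$, so that the binomials $g_c$ in $B$ are actually well-defined and the remainder of any polynomial under division by $B$ lies in a subring on which $\pi_\cC$ is a polynomial isomorphism.
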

    
    \begin{proof}
        Let $c \in \cC$ with $\wt(c) = k \geq 2$, and set 
        \[
            g_c = t_{w^{-1}(c)} - \prod_{j \in \supp(c)} t_{w^{-1}(e_j)}.
        \]
        By construction, $g_i \in I_{\cC}$. By Lemma~\ref{lem: lead terms}, we know that $t_{w^{-1}(c)}$ will be the initial term of this binomial with respect to $\prec_\omega$. So, for all $c \in \cC$ with $\wt(c) \geq 2$, we have $t_{w^{-1}(c)} \in \initp(I_{\cC})$.
    
        Let $\cG$ be the reduced Gr\"obner basis of $I_{\cC}$ with respect to $\prec_\omega$. So, for all $c \in \cC$ with $\wt(c) \geq 2$, there exists $g \in G$ such that $\initp(g) = t_{w^{-1}(c)}$. Also, for all $c' \in \cC$ with $\wt(c') = 1$, there does not exist $p \in I_{\cC}$ with $\initp(p) = t_{w^{-1}(c')}$, since, also by construction, there can be no binomial $t_{w^{-1}(c')} - t_{w^{-1}(c'')}$ in $I_{\cC}$ where $\wt(c') = \wt(c'') = 1$.
        
        Let $g \in \cG$ with $\initp(g) = t_{w^{-1}(c)}$ for some $c \in \cC$ with $\wt(c) \geq 2$. Since $\cG$ is a reduced Gr\"obner basis, we know that no initial term of any $g' \in \cG$, $g' \neq g$, can divide either term of $g$. But for all $c' \in \cC$ with $\wt(c') \geq 2$, $t_{w^{-1}(c')} \in \initp(I_\cC)$. So, the nonlinear term of $g$ must be the product of some $t_{w^{-1}(c_l)}$ with $\wt(c_l) = 1$. 
        This forces $g = g_c$.
        Moreover, there can be no binomials $g'$ for which $\initp(g')$ has degree at least $2$, since $\initp(g')$ would then be divisible by $\initp(g_c)$ for some $c$.
        Thus, $\cG = \calB$.
    \end{proof}
    
    While this lemma gives us a reduced Gr\"obner basis of $I_\cC$, not all of the binomials are indispensable, as evidenced by Example~\ref{ex: example GB}
    
    \begin{lem}\label{lem: elements of B in A}
        The only binomials from $\calB$ in the reduced Gr\"obner basis of $I_{\cC}$ with respect to grevlex are the binomials in $\calA_\cC$.
    \end{lem}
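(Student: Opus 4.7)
My plan is to establish the two inclusions $\calA_\cC \subseteq \calB \cap \cG$ and $\calB \cap \cG \subseteq \calA_\cC$ separately, where $\cG$ denotes the reduced grevlex Gr\"obner basis of $I_\cC$. Both inclusions rest on the fact that $\lgrevlex$ is graded, which determines the locations of the initial terms of the elements of $\calB$. As a preliminary observation, $\calA_\cC$ consists precisely of the weight-two members of $\calB$ up to sign: if $\wt(c) = 2$ and $c = e_{i_1} + e_{i_2}$, then $t_{w^{-1}(c)} - t_{w^{-1}(e_{i_1})} t_{w^{-1}(e_{i_2})} \in \calB$ is the negation of $t_{w^{-1}(e_{i_1})} t_{w^{-1}(e_{i_2})} - t_{w^{-1}(c)} \in \calA_\cC$.

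The forward inclusion is immediate from Proposition~\ref{prop: A indispensable}: each element of $\calA_\cC$ is indispensable, so up to sign it must appear in every reduced Gr\"obner basis of $I_\cC$, in particular in $\cG$. For the reverse inclusion, I would take $b = t_{w^{-1}(c)} - \prod_{j \in \supp(c)} t_{w^{-1}(e_j)} \in \calB$ with $k := \wt(c) \geq 3$. Because $\lgrevlex$ is graded and $k > 1$, the initial term of $b$ is the degree-$k$ monomial $m := \prod_{j \in \supp(c)} t_{w^{-1}(e_j)}$. Choosing two indices $i_1, i_2 \in \supp(c)$ for which $e_{i_1} + e_{i_2} \in \cC$, I would then observe that $t_{w^{-1}(e_{i_1})} t_{w^{-1}(e_{i_2})} - t_{w^{-1}(e_{i_1} + e_{i_2})}$ lies in $\calA_\cC \subseteq \cG$, with grevlex-initial term $t_{w^{-1}(e_{i_1})} t_{w^{-1}(e_{i_2})}$, which properly divides $m$. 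This divisibility would violate the reducedness of $\cG$, and hence $b \notin \cG$.

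The main technical hurdle is justifying the existence of the required pair $(i_1, i_2) \subseteq \supp(c)$ with $e_{i_1} + e_{i_2} \in \cC$ whenever $c \in \cC$ has weight at least three. I would handle this by appealing to the underlying Euler-diagram realization of the external code: the well-formedness constraints (curves meeting in exactly two points, no triple tangencies, and connected zones) preclude a configuration in which every pairwise intersection $U_{i_1} \cap U_{i_2}$ with $i_1, i_2 \in \supp(c)$ is absorbed into some other $U_{i_3}$ with $i_3 \in \supp(c)$. Unlike the algebraic manipulations above, this is a geometric statement about Euler diagrams, and I expect the subtlety of the argument to be concentrated here rather than in the Gr\"obner-basis bookkeeping.
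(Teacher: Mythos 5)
Your argument follows the paper's proof step for step: establish $\calA_\cC \subseteq \cG$ by indispensability (Proposition~\ref{prop: A indispensable}), then kill any $b \in \calB$ of weight $k \geq 3$ by noting that under grevlex its initial term is the degree-$k$ monomial, which is divisible by the initial term $t_{w^{-1}(e_{i_1})}t_{w^{-1}(e_{i_2})}$ of some element of $\calA_\cC$, contradicting reducedness. This is exactly what the paper does.

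Where you go further is in flagging the existence of the pair $i_1,i_2 \in \supp(c)$ with $e_{i_1}+e_{i_2} \in \cC$ as the genuine technical content. The paper simply asserts this (``Since $\cC$ is external, we know that for some $t_{e_i},t_{e_j}$ in the nonlinear term of $g$ \dots'') without proof, so you have correctly located the one place where the Gr\"obner-basis bookkeeping does not suffice on its own. Your proposed justification, however, is not quite aimed at the right target: you speak of ruling out ``a configuration in which every pairwise intersection $U_{i_1}\cap U_{i_2}$ is absorbed into some other $U_{i_3}$ with $i_3 \in \supp(c)$,'' but what must actually be ruled out is that $U_{i_1}\cap U_{i_2}$ is covered by the \emph{union} $\bigcup_{l\neq i_1,i_2} U_l$, where $l$ ranges over \emph{all} indices, not just those in $\supp(c)$. (The case $l\notin \supp(c)$ is easy --- $Z_c \subseteq U_{i_1}\cap U_{i_2}$ and $Z_c \cap U_l = \emptyset$, so no single such $U_l$ can contain $U_{i_1}\cap U_{i_2}$ --- but covering by a union is a different matter.) A careful version of the argument should start from the connectedness of the dual graph of a well-formed diagram and produce, from $Z_c$, a chain of adjacent zones of strictly decreasing weight with supports nested inside $\supp(c)$, arriving at a weight-two codeword supported in $\supp(c)$. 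Since the published proof also leaves this geometric input unproved, your write-up matches the paper both in its approach and in its one unresolved step; you are simply more forthright about the gap.
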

    
    \begin{proof}
        Choose an order $c_1, \ldots , c_n$ of the codewords in $\cC$ such that $\wt(c_i) < \wt(c_j)$ implies $i < j$. Let $\cG$ be the reduced Gr\"obner basis with respect to grevlex. We have shown that the set $A$ is indispensable, so we have that $\calA_\cC \subseteq G$.
        
        Consider a binomial $g \in \calB \setminus \calA_\cC$. Then 
        \[
            g = t_{w^{-1}(c)} - \prod_{j \in \supp(c)} t_{e_j}
        \]
        for some $c \in \cC$ with $\wt(c) = k \geq 3 $. Since $\cC$ is external, we know that for some $t_{e_i},t_{e_j}$ in the nonlinear term of $g$, $g' = t_{c'}-t_{e_i} t_{e_j}$ is in $\calA_\cC$ for some $c' \in \cC$. So, $g_1 \in G$. But, $\initp(g')$ divides $\initp(g)$, so $g \notin \cG$. So no element of $\calB$ that is not in $\calA_\cC$ is indispensable.
    \end{proof}
    
    \begin{thm}
        For an external code $\cC$, the indispensable binomials of $I_{\cC}$ are exactly those in $\calA_\cC$.
    \end{thm}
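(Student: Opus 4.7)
The plan is to combine the results already established: Proposition \ref{prop: A indispensable} gives one direction immediately, since it shows every binomial in $\calA_\cC$ is indispensable. So all that remains is to show every indispensable binomial lies in $\calA_\cC$.

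For the reverse inclusion, I would exploit the defining property of indispensable binomials: they appear (up to sign) in every reduced Gr\"obner basis of $I_\cC$. In particular, any indispensable binomial must appear simultaneously in the reduced Gr\"obner basis with respect to $\prec_\omega$ and in the reduced Gr\"obner basis with respect to grevlex. By Lemma \ref{lem: create B}, the former is exactly $\calB$. By Lemma \ref{lem: elements of B in A}, the only elements of $\calB$ that also belong to the reduced grevlex Gr\"obner basis are those in $\calA_\cC$. Hence every indispensable binomial must belong to $\calA_\cC$.

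Putting the two inclusions together yields the theorem. The argument is entirely a bookkeeping step on top of the preceding lemmas, so there is no real technical obstacle; the substantive content has already been carried out in Lemmas \ref{lem: lead terms}, \ref{lem: create B}, and \ref{lem: elements of B in A}, together with Proposition \ref{prop: A indispensable}. If I were concerned about any subtlety, it would be making sure the ``up to sign'' convention in the definition of indispensability is handled correctly, but since both $\calB$ and $\calA_\cC$ are presented with a fixed sign convention (linear term subtracted from the higher-degree term), this is automatic.
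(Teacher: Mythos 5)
Your proposal is correct and reconstructs exactly the argument the paper intends: the forward inclusion is Proposition~\ref{prop: A indispensable}, and the reverse inclusion comes from noting an indispensable binomial must lie in the $\prec_\omega$ reduced Gr\"obner basis $\calB$ (Lemma~\ref{lem: create B}) and also in the grevlex reduced Gr\"obner basis, and by Lemma~\ref{lem: elements of B in A} the intersection is $\calA_\cC$. The paper simply states ``This follows directly from'' those three results; you have made the same logic explicit.
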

    
    \begin{proof}
        This follows directly from Lemmas~\ref{prop: A indispensable}, \ref{lem: create B}, and \ref{lem: elements of B in A}.
    \end{proof}

So for any given external diagram, we know exactly which binomials must be in a reduced Gr\"obner basis of its toric ideal. However, other binomials clearly appear in some reduced Gr\"obner bases of the toric ideals of external diagrams. For certain classes of external diagrams, there is even more to say about Gr\"obner bases. 

To close this section, we will use graphs to help us describe properties of toric ideals of external codes. 
Given an external code $\cC$ on neurons $\lambda_1,\dots,\lambda_k$, let $\Delta_\cC$ denote the graph with vertices $1,\dots, k$ and edges $\{i, j\}$ if $e_i+e_j$ is a codeword in $\cC$. 

\begin{lem}\label{lem: distance two}
    Let $\cC$ be a code such that $\Delta_\cC$ is a tree.
    If $i,j$ are vertices of $\Delta_\cC$ that are distance two apart, then there exists a unique vertex $k$ for which $t_{w^{-1}(c_i)}t_{w^{-1}(c_j)+w^{-1}(c_k)} - t_{w^{-1}(c_i)+w^{-1}(c_k)}t_{w^{-1}(c_j)} \in I_\cC$
\end{lem}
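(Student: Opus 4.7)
The plan is to read off the vertex $k$ directly from the tree structure of $\Delta_\cC$ and then verify the claimed binomial lies in the kernel of $\pi_\cC$ by computing its image on both monomials. The statement really breaks into two claims: (a) existence and uniqueness of the intermediate vertex $k$, and (b) membership of the specified binomial in $I_\cC$.

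For (a), I would note that by hypothesis the distance from $i$ to $j$ in $\Delta_\cC$ equals two, so there is at least one vertex $k$ with both $\{i,k\}$ and $\{j,k\}$ in the edge set of $\Delta_\cC$. By the definition of $\Delta_\cC$, this is equivalent to saying that $e_i+e_k$ and $e_j+e_k$ are codewords in $\cC$, so that $t_{\{i,k\}}$ and $t_{\{j,k\}}$ are genuine variables in the ambient polynomial ring; externality of $\cC$ provides the weight-one codewords $e_i, e_j$, and hence the variables $t_{\{i\}}, t_{\{j\}}$. Uniqueness then comes from the tree hypothesis: if $k' \neq k$ were a second common neighbor, then the vertices $i, k, j, k', i$ would form a $4$-cycle in $\Delta_\cC$, contradicting acyclicity. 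Thus $k$ is the unique common neighbor of $i$ and $j$.

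For (b), I would simply apply $\pi_\cC$ to both monomials: $t_{\{i\}} t_{\{j,k\}} \mapsto x_i \cdot x_j x_k$ and $t_{\{i,k\}} t_{\{j\}} \mapsto x_i x_k \cdot x_j$, which agree, so the difference lies in $\ker \pi_\cC = I_\cC$. I do not expect a real obstacle in this lemma; the essential content is the elementary combinatorial observation that two distance-two vertices of a tree share exactly one common neighbor, and the algebraic conclusion is then a one-line verification. The lemma's purpose appears to be infrastructural for later results about Gr\"obner bases of codes whose graph $\Delta_\cC$ is a tree, so I would keep the write-up concise and move on.
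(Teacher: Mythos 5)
The paper omits the proof, remarking only that it is ``short and straightforward,'' so there is no authors' argument to compare against; your proof supplies exactly the kind of short argument they had in mind. Your reasoning is correct: distance two gives at least one common neighbor $k$; a second common neighbor $k'$ would produce the $4$-cycle $i \to k \to j \to k' \to i$ (all four vertices distinct since $k,k'$ are adjacent to $i,j$ while $i,j$ are at distance two), contradicting acyclicity; and since the variables $t_{\{i,k\}}, t_{\{j,k\}}$ exist in the ring precisely when $k$ is a common neighbor, the given binomial is well-defined exactly for this $k$, and then both of its monomials map to $x_ix_jx_k$ under $\pi_\cC$, so it lies in $\ker\pi_\cC = I_\cC$. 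The only point worth tightening in a write-up is to state explicitly that for any $k'$ that is not a common neighbor, one of the two factors $t_{\{i,k'\}}$, $t_{\{j,k'\}}$ is not a variable of the ring, so the candidate binomial does not exist, which is what delivers the ``unique $k$'' phrasing of the lemma rather than merely ``unique common neighbor.''
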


The proof of this lemma is short and straightforward, so its proof is omitted.
For the next result, if $v$ is the vertex of a graph, let $d(v)$ denote the degree of $v$.

\begin{thm}
    Let $\cC$ be an external code such that $\Delta_\cC = (V,E)$ is a tree.
    In the universal Gr\"obner basis of $I_\cC$, there are $\sum_{v \in V} \binom{d(v)}{2}$ polynomials of the form $t_{w^{-1}(c_1)}t_{w^{-1}(c_2)} - t_{w^{-1}(c_3)}t_{w^{-1}(c_4)}$ for $c_1,\dots,c_4 \in \cC$ and $c_1+c_2=c_3+c_4$.
\end{thm}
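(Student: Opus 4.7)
The plan is to classify all nonzero binomials of the form $f = t_{w^{-1}(c_1)}t_{w^{-1}(c_2)} - t_{w^{-1}(c_3)}t_{w^{-1}(c_4)}$ with $c_1+c_2=c_3+c_4$ in $I_\cC$, identify them with pairs of edges meeting at a common vertex of $\Delta_\cC$, and then verify each one appears in the universal Gr\"obner basis by exhibiting a term order that places it in a reduced Gr\"obner basis of $I_\cC$.

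First I would enumerate the possible weight profiles $(\wt(c_1),\wt(c_2))$-$(\wt(c_3),\wt(c_4))$. Since $\cC$ is external with $\Delta_\cC$ a tree and the codewords of interest have weight at most two, the only profiles satisfying $\wt(c_1)+\wt(c_2)=\wt(c_3)+\wt(c_4)$ are $(1,1)$-$(1,1)$, $(1,2)$-$(1,2)$, and $(2,2)$-$(2,2)$. For $(1,1)$-$(1,1)$, the multiset $\{c_1,c_2\}$ is recovered from $e_a+e_b$, so $f=0$. For $(2,2)$-$(2,2)$, a nontrivial binomial would require two distinct matchings of a four-element multiset by edges of $\Delta_\cC$; a case analysis on whether the four vertices are all distinct or one is repeated shows the tree hypothesis forces $f=0$ (otherwise a $4$-cycle or loop appears). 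In the middle profile, matching supports forces $f = t_{\{a\}}t_{\{b,c\}} - t_{\{b\}}t_{\{a,c\}}$ with $\{a,c\}$ and $\{b,c\}$ both edges of $\Delta_\cC$; this is precisely the binomial supplied by Lemma~\ref{lem: distance two} for the distance-two pair $\{a,b\}$ with unique common neighbor $c$. Each such binomial is indexed by an unordered pair of edges meeting at a single vertex $v$ of the tree, yielding exactly $\sum_{v\in V}\binom{d(v)}{2}$ candidates.

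Second, I would show each candidate $B=t_{\{a\}}t_{\{b,c\}}-t_{\{b\}}t_{\{a,c\}}$ lies in the reduced Gr\"obner basis of $I_\cC$ with respect to the weight order $\prec_\omega$ defined by $\omega(t_{\{\ell\}})=1$ for each weight-one codeword and $\omega(t_{\{\ell,m\}})=0$ for each weight-two codeword, tie-broken by grevlex. Under $\prec_\omega$, every indispensable binomial $t_{\{\ell,m\}}-t_{\{\ell\}}t_{\{m\}}\in\calA_\cC$ has initial term $t_{\{\ell\}}t_{\{m\}}$ (weight $2$ beats weight $0$). Each monomial of $B$ has weight $1$ and contains exactly one weight-one variable, so no initial term of the form $t_{\{\ell\}}t_{\{m\}}$ divides either monomial of $B$. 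The grevlex tie-breaker selects one monomial as $\initp(B)$, and I would argue this monomial is a minimal generator of $\initp(I_\cC)$: its proper divisors $t_{\{a\}}$ and $t_{\{b,c\}}$ cannot be initial terms of any element of $I_\cC$ under $\prec_\omega$, since the only binomials with these images are trivial identities or indispensables whose initial term has strictly greater $\omega$-weight. Consequently $B$ belongs to the reduced Gr\"obner basis, hence to the universal Gr\"obner basis.

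The main obstacle I anticipate is verifying that the losing monomial of $B$ is a standard monomial under $\prec_\omega$ and that the winning one is a minimal generator of the initial ideal—without these, $B$ would not survive Gr\"obner reduction. I would handle this by exhaustively listing binomials in $I_\cC$ whose $\pi_\cC$-image equals the common support $e_a+e_b+e_c$ of $B$'s two monomials: these are $B$ itself (since $\{a,b\}$ is not an edge of the tree, so no third $(1,2)$ decomposition exists), together with multiples of the indispensables $t_{\{a,c\}}-t_{\{a\}}t_{\{c\}}$ and $t_{\{b,c\}}-t_{\{b\}}t_{\{c\}}$. A direct computation of their $\prec_\omega$-initial terms shows that none produces a new minimal generator of $\initp(I_\cC)$ dividing a monomial of $B$, completing the verification and hence the count.
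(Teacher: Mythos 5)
Your proof is correct and reaches the same conclusion, but it takes a genuinely different route in its key step. The paper deduces the form of the candidate binomials directly from Lemma~\ref{lem: distance two} (the distance-two lemma) and then shows each $p(t)$ already survives as an element of the reduced Gr\"obner basis \emph{with respect to grevlex}: since under grevlex no binomial has a linear leading term, the degree-two monomial $\initp(p(t))$ can only be divided by itself, and then a short case analysis on the possible non-leading terms (ruling out a degree-one variable because $\Delta_\cC$ is a tree, and ruling out the cubic term because grevlex is graded) forces $p(t)\in\cG$. You instead build a bespoke weight order $\prec_\omega$ (weight $1$ on the degree-one variables, weight $0$ on the degree-two ones, grevlex tiebreak) chosen so that the indispensables from $\calA_\cC$ have leading term $t_{\{\ell\}}t_{\{m\}}$, and then argue that neither monomial of $B$ is reducible and that $\initp(B)$ is a minimal generator of the initial ideal. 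Interestingly, your $\omega$ is exactly the \emph{opposite} of the weight vector used in Lemma~\ref{lem: create B}; under the paper's $\omega$ the reduced basis is $\calB$ and $B$ would \emph{not} appear, so the sign flip is essential to your argument, and you should flag that.

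You also do more than the paper: you explicitly classify the nonzero binomials of the prescribed shape by their weight profiles, showing the $(1,1)$-$(1,1)$ and $(2,2)$-$(2,2)$ profiles collapse (the tree hypothesis kills the latter via the four-cycle argument) and only $(1,2)$-$(1,2)$ survives, giving exactly one binomial per pair of edges at a common vertex. The paper skips this enumeration and simply asserts that the count equals the number of length-two paths; your version actually justifies that no other polynomials of the stated form can occur, which is a small but real gap in the paper's writeup. Both arguments share one unstated hypothesis: that $\Delta_\cC$ being a tree entails $|\supp(c)|\leq 2$ for every $c\in\cC$ (so that no $t_{w^{-1}(e_a+e_b+e_c)}$ variable exists); the paper asserts this parenthetically and you rely on it when listing the monomials mapping to $x_ax_bx_c$, so you should state it explicitly rather than inherit it silently. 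Finally, in your final paragraph, checking only the binomials whose image equals $x_ax_bx_c$ is not by itself sufficient to certify standardness of the losing monomial; you should add the observation that toric initial ideals are generated by leading terms of binomials, so a divisibility witness can always be taken to be a binomial, and that the only degree-at-most-two divisors of a monomial of $B$ are $1$, one degree-one variable, one degree-two variable, and the monomial itself, all of which you have ruled out.
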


\begin{proof}
Suppose $\Delta_\cC$ is a tree, and let $i, j,k$ be vertices of $\Delta_\cC$ such that $k$ is adjacent to both $i$ and $j$.
By Lemma~\ref{lem: distance two}, we know that $p(t) = t_{w^{-1}(c_i)}t_{w^{-1}(c_j)+w^{-1}(c_k)} - t_{w^{-1}(c_i)+c_k}t_{w^{-1}(c_j)} \in I_\cC$.
Without loss of generality let $\initp(p(t)) = t_{w^{-1}(c_i)}t_{w^{-1}(c_j)+w^{-1}(c_k)}$. 

Now, we will show that $p(t)$ is in some reduced Gr\"obner basis $\cG$ of $I_\cC$.
Consider the grevlex order.
In this case, no binomial in $\cG$ has a linear initial term. 
Thus, the only way for $\initp(p(t))$ to be divisible by an initial term of a binomial in $\cG$ is if that term is $\initp(p(t))$ itself.
So, $\initp(p(t))$ is the initial term of a binomial $b$ in $\cG$.
Since $\pi_\cC(\initp(p(t))) = x^ix^jx^k$,
there are three possibilities of the non-initial term of $b$: $t_{w^{-1}(c_i)+c_j+c_k}$, $t_{w^{-1}(c_i)}t_{w^{-1}(c_j)}t_{w^{-1}(c_k)}$, and $t_{w^{-1}(c_i)+c_k}t_{w^{-1}(c_j)}$.
The first possibility cannot happen since $\Delta_\cC$ is a tree, meaning $|\supp(t_{w^{-1}(c)})| \leq 2$ for all $c \in \cC$. The second possibility also cannot occur since, otherwise, it would be the initial term of $b$ under grevlex.
This leaves one possibility, hence $p(t) \in \cG$.

Since we obtain a polynomial $p(t)$ for each vertex in $\Delta_\cC$ that is the midpoint of a length-two path, the number of homogeneous quadratic binomials in the universal Gr\"obner basis is the same as the number of paths in the tree of length two, which are enumerated by
\[
\sum_{v\in V}\binom{d(v)}{2}. \qedhere
\]
\end{proof}

\section{Internal Codes}

In this section, we focus on a particular class of codes.
Let the $n^{th}$ \emph{internal code} be the code $\cL_n = \{0\dots0,c_1,\dots,c_{2n}\}$ where the nonzero codewords are 
\[
    c_j = \begin{cases}
            e_1 + \sum_{i=2}^j e_j & \text{ if } j \leq n, \\
            c_{j-n}-e_1 & \text{ if } j > n.
        \end{cases}
\]
It is clear to see that this code has a corresponding Euler diagram
\begin{center}
	\begin{tikzpicture}
		\draw (-3.5,-3.5) -- (3.5,-3.5) -- (3.5,3.5) -- (-3.5,3.5) -- cycle;
		\draw (0,0) circle [radius=3];
		\draw (-1,0) circle [radius=1.75];
		\draw (1,0) circle [radius=1.75];
		\draw (0.75,0) circle [radius=1.25];
		\draw [dashed] (0.7,0) circle [radius=1.12];
		\draw [dashed] (0.575,0) circle [radius=0.875];
		\draw (0.5,0) circle [radius=0.75];
		
		\node at (-2.25,0) {\small{$\lambda_1$}};
		\node at (-2.5,2.5) {\small{$\lambda_2$}};
		\node at (2.45,0) {\small{$\lambda_3$}};
		\node at (1.66,0) {\tiny{$\cdots$}};
		\node at (1.05,0) {\small{$\lambda_n$}};
	\end{tikzpicture}
\end{center}

Call a binomial $t^a - t^b \in I_\cC$ \emph{primitive} if there is no binomial $t^u - t^v \in I_\cC$ such that both $t^u$ divides $t^a$ and $t^v$ divides $t^b$. 
The \emph{Graver basis} of $I_\cC$ is the set of all primitive binomials in $I_\cC$. 
Since every binomial in a reduced Gr\"obner basis of a toric ideal is primitive, the Graver basis will contain the universal Gr\"obner basis of $I_\cC$. 
In fact, in certain cases, the Graver basis is identical to the universal Gr\"obner basis.

\begin{defn}
    Let $A$ be a $k\times n$ matrix.
    Its \emph{Lawrence lifting} is
    \[
        \Lambda(A) = \begin{bmatrix}
            A & 0_{k \times n} \\
            I_n & I_n
        \end{bmatrix}
    \]
    where $0_{k \times n}$ is the $k\times n$ zero matrix and $I_n$ is he $n\times n$ identity matrix.
\end{defn}

Let $\cC = \{0\dots0,c_1,\dots,c_k\}$ be a code.
For notational convenience, let $M_\cC$ denote the matrix with columns $c_1,\dots,c_k$. 
Thus, we can think of the toric ideal $I_\cC$ as the toric ideal of either the code $\cC$ or of the matrix $M_\cC$.
Because row operations on matrices preserve linear dependencies, we can apply them to $M_{\cL_n}$ and compute the toric ideal of the simpler matrix.
In our case, it is straightforward to verify that $M_{\cL_n}$ is row-equivalent to the Lawrence lifting of the $1 \times n$ matrix $\begin{bmatrix} 1 & \cdots & 1 \end{bmatrix}$, say by multiplying $M_{\cL_n}$ on the left by the matrix with rows $r_1,\dots,r_n$, where
\[
    r_i = 
        \begin{cases}
            e_1 & \text{ if } i = 1, \\
            e_{i-1}-e_i & \text{ if } 2 \leq i \leq n-1, \\
            e_n & \text{ if } i = n.
        \end{cases}
\]

\begin{thm}[{\cite[Theorem~7.1]{sturmfels}}]\label{thm: graver equal universal}
	Let $\cC$ be any combinatorial neural code.
	The following sets are identical:
	\begin{enumerate}
		\item the universal Gr\"obner basis of $I_{\Lambda(M_\cC)}$,
		\item the Graver basis of $I_{\Lambda(M_\cC)}$,
		\item the minimal binomial generating set of $I_{\Lambda(M_\cC)}$.
	\end{enumerate}
\end{thm}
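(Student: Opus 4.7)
The plan is to prove the triangle of inclusions $(1) \subseteq (2) \subseteq (3) \subseteq (1)$, with the structural rigidity of the Lawrence lifting doing the essential work in the middle inclusion. I would begin by computing $\ker \Lambda(M_\cC) = \{(u,-u) : u \in \ker M_\cC\}$, so that every binomial in $I_{\Lambda(M_\cC)}$ has the shape
\[
b_u \;:=\; x^{u^+}y^{u^-} \,-\, x^{u^-}y^{u^+},
\]
where $u = u^+ - u^-$ is the decomposition into nonnegative parts with disjoint supports. The crucial feature is \emph{signed support disjointness}: within the $x$-block a variable appears in at most one of the two monomials, and likewise within the $y$-block. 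This observation underlies every subsequent step.

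For $(1) \subseteq (2)$ I would invoke a general fact: reduced Gr\"obner bases consist of primitive binomials. Indeed, if $b = t^a - t^b$ were in some reduced Gr\"obner basis but failed to be primitive, witnessed by $t^u - t^v \in I$ with $t^u \mid t^a$ and $t^v \mid t^b$, then $t^u$ lies in the initial ideal and so is divisible by some initial term of another basis element, contradicting reducedness. For $(3) \subseteq (1)$, indispensability does the work: a binomial that appears (up to sign) in every minimal binomial generating set lies in every reduced Gr\"obner basis and hence in the universal Gr\"obner basis. The indispensability of minimal generators will be obtained as a byproduct of the next step.

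The central inclusion is $(2) \subseteq (3)$. Given a primitive $u$, I would assume $b_u$ is expressed as a polynomial combination $b_u = \sum q_i b_{v_i}$ in some binomial generating set and track the monomial $x^{u^+}y^{u^-}$. The signed support disjointness forces some $b_{v_i}$ to supply this monomial without cancellation from other summands, which means $v_i^+ \leq u^+$ and $v_i^- \leq u^-$ coordinatewise. Then $v_i$ is sign-compatible with $u$ and is dominated by it in the conformal order, so primitivity of $u$ forces $v_i = \pm u$, showing $b_u$ already lies in the generating set. Thus every Graver element is indispensable, proving $(2) \subseteq (3)$ and, along the way, that every minimal binomial generating set consists of indispensable binomials.

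The main obstacle is making the no-cancellation argument precise. In general toric ideals, binomials combine in ways that collapse and reshuffle supports, so one cannot locate any given monomial in a sum of products. What saves us here is precisely the Lawrence structure: each variable carries unambiguous signed information via its block ($x$ or $y$) and its sign role in $u^{\pm}$, so pairs of terms from different $q_i b_{v_i}$ cannot conspire to produce $x^{u^+}y^{u^-}$ via cancellation. Formalizing this—typically by a careful coordinatewise comparison of exponent vectors, or by induction on $|u|$—is the technical heart of the argument and the place where any proof would need to be written with the greatest care.
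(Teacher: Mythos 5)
This statement is a direct citation of Theorem~7.1 from Sturmfels' book; the paper gives no proof of its own, so there is nothing in the paper to compare against word-for-word. That said, your reconstruction follows the standard argument faithfully: the chain universal GB $\subseteq$ Graver basis $\subseteq$ indispensables $\subseteq$ every binomial generating set, with the Lawrence structure powering the middle step. The broad strokes are right, and the reduction of everything to ``Graver elements of $I_{\Lambda(M_\cC)}$ are indispensable'' is exactly the crux.

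Where I would push back is on your diagnosis of the ``main obstacle.'' You flag the no-cancellation step as the technical heart and worry that ``pairs of terms from different $q_i b_{v_i}$ cannot conspire to produce $x^{u^+}y^{u^-}$ via cancellation.'' That worry is unfounded, and for a reason that has nothing to do with the Lawrence structure: cancellation can only remove a monomial from a sum, never create one. If $x^{u^+}y^{u^-}$ appears with coefficient $1$ in $\sum_i q_i b_{v_i}$, then before any collecting of terms it already appears in some single product $q_i b_{v_i}$, and therefore is divisible by one of the two monomials $x^{v_i^+}y^{v_i^-}$ or $x^{v_i^-}y^{v_i^+}$. That is an elementary fact about polynomial arithmetic, true in any toric (or indeed any) setting. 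The actual work done by the Lawrence lifting enters one step later, and it is slightly different from how you describe it: because the $x$-block records $u^+$ and the $y$-block records $u^-$ within the \emph{same} monomial, divisibility of $x^{u^+}y^{u^-}$ by $x^{v^+}y^{v^-}$ forces \emph{both} $v^+ \le u^+$ and $v^- \le u^-$ coordinatewise. In an un-lifted toric ideal you would only get control on one side at a time, which is why Graver elements there need not be indispensable. Here, divisibility of one monomial of $b_{v_i}$ by one monomial of $b_u$ automatically gives divisibility on the other side too, which is precisely the hypothesis that primitivity of $u$ rules out unless $v_i = \pm u$. If you reorganize the writeup around that observation, the proof closes with no delicate bookkeeping, and your $(3)\subseteq(1)$ step also becomes clean: once every Graver element is shown indispensable, it lies in every binomial generating set of the ideal, in particular in every reduced Gr\"obner basis and hence in the universal one.
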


Since $M_{\cL_n}$ is row-equivalent to $A_n = \Lambda(\begin{bmatrix} 1 & \cdots & 1 \end{bmatrix})$, all of the results of the preceding theorem hold for $I_{\cL_n}$ as well.
For convenience, we will continue by considering $I_{A_n}$.

To prove the main theorem of the section, let 
\[
    \cU_n = \{t_{\{1,j\}}t_{\{k\}}-t_{\{1,k\}}t_{\{j\}} \mid 2 \leq j < k \leq n, j \neq k \}.
\]
It is clear that $\cU_n \subseteq I_{\cL_n}$, by verifying that
\[
    \pi_{\cL_n}(t_{\{1,j\}}t_{\{k\}}-t_{\{1,k\}}t_{\{j\}}) = 0.
\]

\begin{lemma}\label{lem: all quads}
    For all $n$, $\cU_n$ contains all monic homogeneous quadratic binomials in $I_{A_n}$, up to sign.
\end{lemma}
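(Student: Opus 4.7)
The plan is to exploit the Lawrence-lifting structure of $A_n$ directly. For any matrix $A$, one has $\ker \Lambda(A) = \{(v,-v) : v \in \ker A\}$ by inspection of the block form; in our setting this specializes to
\[
    \ker A_n = \{(v,-v) : v \in \ZZ^n,\ \textstyle\sum_i v_i = 0\}.
\]

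The first step is to let $f = t^\alpha - t^\beta$ be any monic homogeneous quadratic binomial in $I_{A_n}$. Membership in the toric ideal is equivalent to $\alpha - \beta \in \ker A_n$, so one can write $\alpha - \beta = (v,-v)$ for some $v \in \ZZ^n$ with $\sum_i v_i = 0$. Homogeneity and the quadratic assumption give $|\alpha| = |\beta| = 2$, while $\alpha,\beta \in \NN^{2n}$ yields $\|\alpha - \beta\|_1 \leq |\alpha| + |\beta| = 4$. Since $\|\alpha-\beta\|_1 = 2\|v\|_1$, this forces $\|v\|_1 \leq 2$. A one-line case analysis then shows that the only vectors with $\sum_i v_i = 0$ and $\|v\|_1 \leq 2$ are $v = 0$ (the trivial binomial, which is excluded) and $v = e_j - e_k$ for some pair $j \neq k$.

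In the nontrivial case, $\alpha - \beta = (e_j - e_k,\, e_k - e_j)$, and the identity $|\alpha| = |\beta| = 2$ leaves no room for $\alpha$ and $\beta$ to share common support; hence $\alpha = e_j^{(1)} + e_k^{(2)}$ and $\beta = e_k^{(1)} + e_j^{(2)}$, where the superscripts index the two blocks of the Lawrence lifting. Under the row-equivalence established just before Theorem~\ref{thm: graver equal universal}, the first block of variables corresponds to codewords of $\cL_n$ containing the outer label (i.e.\ the $t_{\{1,j\}}$-type variables) and the second block to those that do not (the $t_{\{j\}}$-type), so $t^\alpha - t^\beta$ reads as $t_{\{1,j\}}t_{\{k\}} - t_{\{1,k\}}t_{\{j\}}$, which (after reordering so that $j < k$) is an element of $\cU_n$.

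I expect the main obstacle to be purely notational: verifying that the implicit dictionary between the columns of $A_n$ and the codewords $c_1,\dots,c_{2n}$ of $\cL_n$ matches the index conventions in the definition of $\cU_n$, so that the range $2 \leq j < k \leq n$ falls out correctly. Once that bookkeeping is pinned down, the entire argument reduces to the short kernel computation above, since the Lawrence structure trivializes the enumeration of quadratic kernel elements.
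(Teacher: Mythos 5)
Your proof is correct, and it takes a somewhat different route than the paper's. The paper's argument notes that the first $n$ columns of $A_n$ are linearly independent (as are the last $n$), uses this to force each monomial of a nontrivial homogeneous quadratic to be a product of one variable from each block, and then directly matches the two monomials. You instead compute $\ker \Lambda(A) = \{(v,-v): v \in \ker A\}$ explicitly, specialize to $A = [1\,\cdots\,1]$ to get the constraint $\sum_i v_i = 0$, and then use the $\ell^1$-bound $\|\alpha - \beta\|_1 \le |\alpha| + |\beta| = 4$ together with $\|\alpha-\beta\|_1 = 2\|v\|_1$ to pin down $v \in \{0\} \cup \{e_j - e_k\}$. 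This is a cleaner and more systematic argument: the kernel characterization of the Lawrence lifting is the crisp version of the paper's block-independence observation, and the norm inequality replaces an ad hoc case check on monomial supports. One small step you wave at --- that $\alpha - \beta = (e_j - e_k, e_k - e_j)$ together with $|\alpha| = |\beta| = 2$ forces $\alpha = e_j^{(1)} + e_k^{(2)}$ and $\beta = e_k^{(1)} + e_j^{(2)}$ --- does go through, since nonnegativity already forces $\alpha \ge e_j^{(1)} + e_k^{(2)}$ and $\beta \ge e_k^{(1)} + e_j^{(2)}$; you could state that explicitly. The notational translation between the columns of $A_n$ and the codewords of $\cL_n$ that you flag is indeed the same bookkeeping the paper glosses over (the paper's definition of $\cU_n$ restricts to $2 \le j < k \le n$, and matching this to your index set deserves one line), but this does not affect the soundness of the kernel computation.
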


\begin{proof}
First, note that the first $n$ columns and the last $n$ columns of $A_n$ are linearly independent sets.
So, if $t^a-t^b \in I_{A_n}$ is monic, homogeneous, and quadratic, then each monomial must be a product of one variable $t_{\{1,j\}}$ and one variable $t_{\{k\}}$. 
If $j<k$ in the leading term, then the only possible binomial is $t_{\{1,j\}}t_{\{k\}}-t_{\{1,k\}}t_{\{j\}}$. 
Similarly, if $k>j$, then the only possible binomial is $-(t_{\{1,j\}}t_{\{k\}}-t_{\{1,k\}}t_{\{j\}})$. 
The case $j=k$ cannot happen, since the vector $e_1+2e_j$ can only be obtained as the sum of columns of $A_n$ in one way.
\end{proof}

A result due to Gross, Obatake, and Youngs says if a diagram is 1-inductively pierced then its toric ideal is generated by quadratics \cite{GrossObatakeYoungs}. These quadratic binomials are homogeneous based on a lemma by Sturmfels \cite{sturmfels}. Since we have found all possible forms of quadratic binomials we can now say that the set $B$ generates the toric ideal.

\begin{lemma}\label{lem: Un primitive}
    For all $n$, the binomials of $\cU_n$ are primitive.
\end{lemma}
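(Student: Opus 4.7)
The plan is to prove primitivity of each binomial in $\cU_n$ by a direct enumeration of divisors. Fix $b = t_{\{1,j\}}t_{\{k\}} - t_{\{1,k\}}t_{\{j\}} \in \cU_n$ and suppose, aiming at the contrapositive of primitivity, that $t^u - t^v \in I_{A_n}$ is a nonzero binomial with $t^u \mid t_{\{1,j\}}t_{\{k\}}$ and $t^v \mid t_{\{1,k\}}t_{\{j\}}$. Because both monomials of $b$ are squarefree quadratics in four distinct variables (using $j \neq k$), the possible choices for $t^u$ are exactly $1,\, t_{\{1,j\}},\, t_{\{k\}},\, t_{\{1,j\}}t_{\{k\}}$, and the possible choices for $t^v$ are exactly $1,\, t_{\{1,k\}},\, t_{\{j\}},\, t_{\{1,k\}}t_{\{j\}}$.

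Next I would translate membership in $I_{A_n}$ into the equality $\pi_{A_n}(t^u) = \pi_{A_n}(t^v)$. Because $A_n$ is the Lawrence lifting of $[1\ \cdots\ 1]$, the column corresponding to $t_{\{1,i\}}$ is $e_0 + e_i$ and the column corresponding to $t_{\{i\}}$ is $e_i$; consequently $\pi_{A_n}(t_{\{1,i\}}) = y_0 y_i$ and $\pi_{A_n}(t_{\{i\}}) = y_i$. Tabulating the images of the four candidates for $t^u$ gives $\{1,\, y_0 y_j,\, y_k,\, y_0 y_j y_k\}$, and tabulating the images of the four candidates for $t^v$ gives $\{1,\, y_0 y_k,\, y_j,\, y_0 y_j y_k\}$; the four images in each list are pairwise distinct because $j \neq k$.

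Finally, I would observe that these two image sets intersect in exactly $\{1,\, y_0 y_j y_k\}$. The match at $1$ gives the trivial zero binomial (not a binomial in the relevant sense), and the match at $y_0 y_j y_k$ forces $(t^u, t^v) = (t_{\{1,j\}}t_{\{k\}},\, t_{\{1,k\}}t_{\{j\}})$, returning $b$ itself. Hence no other binomial in $I_{A_n}$ divides $b$ monomial-by-monomial, and $b$ is primitive. I do not anticipate a substantive obstacle; the only care needed is bookkeeping so that the trivial pair $(1,1)$ is excluded and the enumeration is complete.
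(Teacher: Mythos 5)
Your proof is correct and takes essentially the same approach as the paper: enumerate the divisors of each squarefree quadratic monomial of $b$ and use the block structure of $A_n$ to verify that the only divisor pair lying in $I_{A_n}$ is $b$ itself (together with the trivial pair). Your explicit tabulation of the images of all $4+4$ candidate divisors under $\pi_{A_n}$ replaces the paper's shorter appeal to homogeneity of $I_{A_n}$ and the fact that no two columns of $A_n$ coincide, but the underlying reasoning is the same.
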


\begin{proof}
    Let $t_{\{1,j\}}t_{\{k\}}-t_{\{1,k\}}t_{\{j\}} \in \cU_n$, and suppose there is some binomial $t^v-t^w \in I_{A_n}$ such that $t^v$ divides $t^a$ and $t^w$ divides $t^b$.
    If $\deg(t^v) = 1$, then $\deg(t^w) = 1$, but this cannot happen since no column of $A_n$ is a scaling of another column. 
    So, $t^v = t_{\{1,j\}}t_{\{k\}}$.
    Again by the structure of $A_n$, this forces $t^v = t_{\{1,k\}}t_{\{j\}}$.

    As stated above, we know that the toric ideal is generated by quadratic homogeneous binomials. 
    Since we know this, we can guarantee that the degree of any monomial in the toric ideal is at least 2. 
    Because of this, we know that the only way to construct the binomial $t^v-t^w$ such that $t^v$ divides $t^a$ and $t^w$ divides $t^b$ is if $t^v$ and $t^w$ both are at least of degree 2. 
    If $t^v$ and $t^w$ are both at least of degree 2 then $t^a=t^v$ and $t^b=t^w$. 
    Therefore, all of the binomials in the set $B$ are primitive binomials.
    This implies that the set $B$ is a minimal binomial generating set of $I$ and is therefore the Graver basis of $I_{A_n} = I_{\cL_n}$ and the universal Gr\"obner basis of $I_{\cL_n}$ by Sturmfels's Theorem \cite{sturmfels}. 
\end{proof}    
	
\begin{thm}
    The universal Gr\"obner basis of $\cL_n$ is $\cU_n$.
\end{thm}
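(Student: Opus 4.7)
The plan is to assemble the pieces already established in this section, so the proof is essentially a short synthesis. First, I would observe that the Euler diagram associated with $\cL_n$ is clearly $1$-inductively pierced, so by the Gross--Obatake--Youngs result cited just before Lemma~\ref{lem: all quads}, the ideal $I_{\cL_n}$ is generated by quadratic binomials, and by Sturmfels's lemma these quadratic generators are homogeneous. Combining this with Lemma~\ref{lem: all quads}, which says that $\cU_n$ contains every monic homogeneous quadratic binomial of $I_{A_n}$ up to sign, I conclude that $\cU_n$ is a generating set of $I_{A_n} = I_{\cL_n}$.

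Next, I would invoke Lemma~\ref{lem: Un primitive} to say that every element of $\cU_n$ is primitive. Because $\cU_n$ generates $I_{\cL_n}$ and each generator is primitive, no proper subset can generate the ideal, so $\cU_n$ is in fact a minimal binomial generating set.

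Finally, I would apply the row-equivalence of $M_{\cL_n}$ with the Lawrence lifting $A_n = \Lambda(\begin{bmatrix} 1 & \cdots & 1 \end{bmatrix})$ discussed earlier in the section. This row-equivalence preserves the toric ideal, so Theorem~\ref{thm: graver equal universal} applies to $I_{\cL_n}$: its universal Gr\"obner basis, its Graver basis, and its minimal binomial generating set all coincide. Since we have identified $\cU_n$ as the minimal binomial generating set, it follows that $\cU_n$ is the universal Gr\"obner basis of $I_{\cL_n}$.

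The main obstacle, as far as I can see, is not in this final theorem itself but in the preliminary verifications, in particular confirming that the row operations encoded by $r_1,\dots,r_n$ really do transform $M_{\cL_n}$ into $A_n$, and that the $1$-inductively pierced hypothesis is legitimately in force for $\cL_n$ so that the Gross--Obatake--Youngs quadratic-generation result can be applied. Once those inputs are in place, the present theorem follows immediately from Lemmas~\ref{lem: all quads} and~\ref{lem: Un primitive} together with Theorem~\ref{thm: graver equal universal}.
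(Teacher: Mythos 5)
Your proposal follows essentially the same approach as the paper: invoke Gross--Obatake--Youngs (Theorem~\ref{thm: GOY}, part 3) to get quadratic generation, apply Lemma~\ref{lem: all quads} and Lemma~\ref{lem: Un primitive} to identify $\cU_n$ as a minimal binomial generating set, and then conclude via the row-equivalence with the Lawrence lifting $A_n$ and Theorem~\ref{thm: graver equal universal}. The paper's proof is the same chain of reductions, stated somewhat more tersely.
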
 

\begin{proof}
    By Theorem~\ref{thm: GOY}, part $3$, we know that $I_{\cL_n}$ is generated by quadratics.
    Since $I_{\cL_n}$ is homogeneous, these quadratics must be homogeneous.
    By Lemma~\ref{lem: all quads} and Lemma~\ref{lem: Un primitive}, $\cU_n$ is a minimal binomial generating set of $I_{\cL_n}$.
    Therefore, by Theorem~\ref{thm: graver equal universal}, $\cU_n$ is the universal Gr\"obner basis of $\cL_n$.
\end{proof}

\end{document}